\newtheorem{thm}{Theorem}[section]
\newtheorem{pp}{Proposition}
\newtheorem{claim}{Claim}
\theoremstyle{definition}
\newtheorem{remark}{Remark}
\newtheorem{example}{Example}
\newcommand*\diff{\mathop{}\!\mathrm{d}}
\DeclareMathOperator{\WF}{WF}
\begin{document}
\title{The X-ray transform on a generic family of smooth curves}
\author[Y. Zhang]{Yang Zhang}
\begin{abstract}
We study the X-ray transform over a generic family of smooth curves in $\mathbb{R}^2$ with a Riemannian metric $g$.
We show that the singularities cannot be recovered from local data in the presence of conjugate points, and therefore artifacts may arise in the reconstruction. 
We perform numerical
experiments to illustrate the results.
\end{abstract}

\maketitle

\section{Introduction}
In this work, we consider the integral transform over a generic family of smooth curves with conjugate points.
Let $M \subset \mathbb{R}^2$ be a bounded domain with a Riemannian metric $g$.
We consider a family $\Gamma$ of smooth curves satisfying the following properties.
\begin{itemize} 
    \item[(A1)] For each $(x,v) \in TM\setminus0$, there is exactly one unique curve $\gamma(t) \in \Gamma$ passing $x$ in the direction of $v$. We can assume $\gamma(0) = x$ and $\dot{\gamma}(0) = \mu(x,v) v$ with a smooth function $\mu(x,v) >0$ and denote it by $\gamma_{x,v}(t)$.
    \item[(A2)] Suppose $\gamma_{x,v}(t)$ depends on $(x,v)$ smoothly, and thus we assume 
    it solves a second order ODE 
    \begin{equation*}
    D_t\dot{\gamma}_{x,v}(t) = G(\gamma_{x,v},\dot{\gamma}_{x,v}),
    \end{equation*}
    where $D_t$ is the covariant derivative along $\dot{\gamma}$ and the generator $G$ is a smooth map invariantly defined on $TM$
    with the transformation law
    \[
    \widetilde{G}^i(\tilde{x}^i, \tilde{y}^i) = \frac{\partial \tilde{x}^i}{\partial x^j} G^j(x^i,y^i), \quad \text{ where }\tilde{y}^i = \frac{\partial \tilde{x}^i}{\partial x^j} y^j,
    \]
    under the coordinate change $x^i \mapsto \tilde{x}^i$ of $M$. 
\end{itemize}
We emphasize that the second property describes a large family of curves, including geodesics in Riemannian surfaces, geodesics in Finsler spaces, and magnetic geodesics, for more details see \cite[Chapter 4]{Bucataru2007}, \cite[Chapter 3]{Shen2013}, \cite{MR2351370}, and \cite{douglas1927general}.
For such a curve, we can freely shift the parameter but rescaling it may give us a different one, since the trajectory depends on the energy level.
This implies the first property is necessary for us to have a geodesic-like family of curves.

%
%

We define the integral transform along curves in $\Gamma$ as
\[
I_w f (\gamma)= \int w(\gamma(t),\dot{\gamma}(t)) f(\gamma(t)) dt, \quad \gamma \in \Gamma,
\]
for any distribution $f$ compactly supported in $M$.
This integral transform has been studied in \cite{Frigyik2007}.
It is shown that if $\Gamma$ is analytic and regular, i.e., if $G$ is analytic and $N^*\Gamma$ covers $T^*M$ without conjugate points,
then $I_w$ with an analytic and nonvanishing weight function is injective. A stability result for the localized normal operator is established there.
In \cite{Assylbekov2017}, the integral transform of functions and $1$-forms over such curves in an oriented Finsler surface is studied and the injectivity is proved if there are no conjugate points.

However, when conjugate points exist, in many cases the microlocal stability are lost.
The phenomenon of cancellation of singularities due to the existence of conjugate points are fully studied in the case of synthetic aperture radar imaging in \cite{MR3080199}, the X-ray transforms over a family of geodesic-like curves in \cite{MR2970707}, the geodesic ray transforms in \cite{Holman2015,Monard2015, Holman2017}. In particular, \cite{Monard2015} shows
that regardless of the type of the conjugate points, the geodesic ray transforms on Riemannian surfaces are
always unstable and we have loss of all derivatives, which leads to the artifacts in the reconstruction near
pairs of conjugate points.
The same phenomenon occur in the inversion of broken ray transforms with conjugate points, see \cite{Zhang2020}.
For more references about the integral transform over geodesics, magnetic geodesics, or a general family of curves, see \cite{Ainsworth, homan2017injectivity, kunstmann2021, Geometric,zhou2018local}.

This work is inspired by \cite{Monard2015} and we prove an analog of cancellation of singularities for $I_w$ in Theorem \ref{thm_C_diff} and \ref{thm_microkernel}.
We prove that the local problem is ill-posed if there are conjugate points, i.e., singularities
conormal to smooth curves in $\Gamma$ cannot be recovered uniquely. 
The microlocal kernel is described in Theorem \ref{thm_microkernel}.
In Section \ref{lambda}, we introduce $\lambda$-geodesics to describe this generic family of smooth curves and we define the conjugate points.
In Section \ref{sec_microlocal}, we consider the recovery of $f$ from the integral transform $I_w$ using the local data.
Here by the local data, we mean the integral transform is only known in a small neighborhood of a fixed curve $\gamma(t)  \in \Gamma$. 
The transform $I_w$ is an Fourier integral operator (FIO) with its canonical relation described in Proposition \ref{pp_C}.
We microlocalize the problem and prove Theorem \ref{thm_C_diff} and Theorem \ref{thm_microkernel}.
In Section \ref{sec_example}, we present several examples of such family of curves and show the existence of conjugate points.
In Section \ref{sec_num}, we illustrate the artifacts arising
in the reconstruction by numerical experiments. 
In particular, we show for the family of smooth curves in Example \ref{ex_c}, the singularities of $f$ can be recovered without artifacts if we consider the global data and have the prior knowledge that $f$ is a distribution with compact support. 

%
%


\subsection*{Acknowledgments}
The author would like to thank Plamen Stefanov for suggesting this problem and for numerous helpful discussions with him throughout this project, and to thank Gabriel Paternain and Gunther Uhlmann for helpful suggestions.
Part of this research was supported by NSF Grant DMS-1600327, the Simons Travel Grants, and was performed while the author was visiting IPAM.

\section{The $\lambda$-geodesics}\label{lambda}
\subsection{The generator $G$}
Locally we can rewrite the second order ODE as
\[
\ddot{\gamma}^i_{x,v}(t) = G^i(\gamma_{x,v}, \dot{\gamma}_{x,v}) - \dot{\gamma}_{x,v}^k\dot{\gamma}_{x,v}^l\Gamma_{kl}^i,  \quad i = 1, 2,
\]
where $\Gamma_{kl}^i$ is the Christoffel symbol.
The generator $G$ induces a vector field $\mathbf{G}$ on $TM$ given by 
\[
\mathbf{G} = y^i \frac{\partial }{\partial x^i} + (G^i(x, y) - y^ky^l \Gamma_{kl}^i)\frac{\partial }{\partial y^i} 
\]
in a local chart $\{x^i, y^i\}$ of $TM$. 
It preserves the form under coordinate changes by the following arguments and thus is invariantly defined, see also (30) in \cite{Frigyik2007}.
Indeed, consider a local coordinate chart $\{x^i, y^i\}$ near a fixed point $(x, y) \in TM$. 
Under the coordinate change $x^i \mapsto \tilde{x}^i$ of $M$, we have $\tilde{y}^i = ({\partial \tilde{x}^i}/{\partial x^j}) y^j$.
The double tangent bundle $T_{(x,y)} TM$ has a natural basis $\{{\partial }/{\partial {x}^i},{\partial }/{\partial {y}^i}\}$, which transforms as
\begin{align*}
&\frac{\partial }{\partial {x}^i} = \frac{\partial \tilde{x}^j}{\partial x^i} \frac{\partial }{\partial \tilde{x}^j} + \frac{\partial \tilde{y}^j}{\partial x^i} \frac{\partial }{\partial \tilde{y}^j}
= \frac{\partial \tilde{x}^j}{\partial x^i} \frac{\partial }{\partial \tilde{x}^j} + y^k \frac{\partial^2 \tilde{x}^j}{\partial x^i\partial x^k} \frac{\partial }{\partial \tilde{y}^j}, \\
&\frac{\partial }{\partial {y}^i} = \frac{\partial \tilde{x}^j}{\partial x^i} \frac{\partial }{\partial \tilde{y}^j}.
\end{align*}
In addition, the Christoffel symbol transforms as 
\[
{\Gamma}_{kl}^i = \frac{\partial x^i}{\partial \tilde{x}^m}\frac{\partial \tilde{x}^n}{\partial x^k} \frac{\partial \tilde{x}^p}{\partial x^l} \tilde{\Gamma}_{np}^m + \frac{\partial^2 \tilde{x}^m}{\partial x^k \partial x^l} \frac{\partial x^i}{\partial \tilde{x}^m}.
\]
We plug these transformation laws above in $\textbf{G}$ to have 
\begin{align*}
\textbf{G} &= y^i \frac{\partial }{\partial x^i} +  (G^i(x, y) - y^ky^l \Gamma_{kl}^i)\frac{\partial }{\partial y^i} \\
&= y^i\frac{\partial \tilde{x}^j}{\partial x^i} \frac{\partial }{\partial \tilde{x}^j} 
+ \big(y^iy^k \frac{\partial^2 \tilde{x}^j}{\partial x^i\partial x^k} + (G^i -  y^ky^l (\frac{\partial x^i}{\partial \tilde{x}^m}\frac{\partial \tilde{x}^n}{\partial x^k} \frac{\partial \tilde{x}^p}{\partial x^l} \tilde{\Gamma}_{np}^m + \frac{\partial^2 \tilde{x}^m}{\partial x^k \partial x^l} \frac{\partial x^i}{\partial \tilde{x}^m})) \frac{\partial \tilde{x}^j}{\partial x^i}\big) \frac{\partial }{\partial \tilde{y}^j}\\
&= \tilde{y}^j \frac{\partial }{\partial \tilde{x}^j}  + (\tilde{G}^j - \tilde{y}^k\tilde{y}^l \tilde{\Gamma}_{kl}^j)
\frac{\partial }{\partial \tilde{y}^j}.
\end{align*}
\subsection{Reparameterize $\gamma$}
Since $\dot{\gamma}$ never vanishes, we reparameterize these curves such that they have unit speeds w.r.t. the Riemannian metric $g$.
Note that this reparameterization will also change the generator $G(\gamma, \dot{\gamma})$ and the weight function in the integral transform.
In the following, we abuse the notation and use $\gamma_{x,v}(t)$ to denote the smooth curve with arc length passing $(x,v) \in SM$, where $SM$ is the unit circle bundle.
The family of smooth curves with this arc length parameterization solves a new ODE
\[
D_t \dot{\gamma} = \lambda(\gamma, \dot{\gamma}) \dot{\gamma}^{\perp},
\]
and are referred to as the $\lambda$-geodesics in \cite[Chapter 7]{Merry2011},
where $ \lambda(\gamma, \dot{\gamma}) \in C^{\infty}(SM)$ and $\dot{\gamma}^{\perp}$ is the vector normal to ${\gamma}^{\perp}$ with the same length and the rotation of $\pi/2$. 
When $\lambda$ is a smooth function on $M$, the corresponding $\lambda$-geodesic flow is called the magnetic flow, see \cite{ginzburg1996closed,MR2351370}.
When $\lambda$ is a smooth function on $SM$, it is called the thermostats, see \cite{dairbekov2007entropy}.

\subsection{Extend $\Gamma$}\label{subsection_extend}
Moreover, it is convenient to extend $\Gamma$,
such that for any $V = rv \in T_pM\setminus 0$, with  $r>0$ and $v\in S_pM$, there exists a unique curve $\gamma_{x,V}(t)$ belonging to $\Gamma$.
This can be done by defining $\gamma_{x,V}(t) = \gamma_{x,v}(rt)$.
We note that $\gamma_{x,V}(t)$ satisfies the following ODE
$$
D_t \dot{\gamma}_{x,V}(t) = |V|_g \lambda({\gamma}_{x,V}(t),\frac{1}{|V|_g}\dot{\gamma}_{x,V}(t)) \dot{\gamma}^\perp_{x,V}(t),
$$
which is equivalent to extend $\lambda(x, v)$ to $C^{\infty}(TM\setminus0)$.

As we mentioned before, for a curve satisfying the equation in Property (A2) in general, rescaling it might not give us the same one, i.e.,
$\gamma_{p,v}$ and $\gamma_{p,rv}$ are different curves.
However,  with the reparameterization and the extension above, the rescaling will cause no problem.

\subsection{Conjugate points.}
We define the \textit{exponential map} as  $\exp_p(t,v) =  \gamma_{p,v}(t)$ for $(p,v) \in SM$, see \cite{MR2970707}.
This definition uses polar coordinates and is independent of a change of the parameterization for curves in $\Gamma$.
For $v \in S_pM$, $t \in \mathbb{R}$, the map $V = t v \mapsto \exp_p(t,v)$ may not be smooth near $V = 0 \in T_pM$, see \cite[A.3]{MR2351370} for more details and properties.

By $d_{v} \exp_p(t,v)$, we always mean the differential of $\exp_p(t,v)$ w.r.t $v \in S_p M$.
This notation is different from the one $d_{v} \exp_p(v)$ for $v \in T_p M$ used in the case of geodesics.
We say a point $q = \gamma_{p,v}(t_0)$ is \textit{conjugate} to $p = \gamma_{p,v}(0)$ if the differential $d_{t,v} \exp_p(t,v)$ of the exponential map in polar coordinates has the rank less than the maximal one at $(t_0, v)$. 

To further describe the conjugate points, we consider the flow $\phi_t: SM \rightarrow SM$ given by
$
\phi_t(x,v) = (\gamma_{x,v}(t),\dot{\gamma}_{x,v}(t)).
$
For a fixed curve $\gamma_{p_0,v_0}(t)$, we set
\[
(p_1,v_1) = \phi_t(p_0,v_0), \quad (p_2,v_2) = \phi_{t+s}(p_0,v_0),
\]
where $(p_j,v_j) \in SM$, for $j = 0,1,2$.
Note that $\phi_t: SM \rightarrow SM$ is a local diffeomorphism, see \cite[Chapter 7]{Merry2011}.
Its pushforward $\diff \phi_t: T_{(p,v)} SM \rightarrow T_{\phi_t(p,v)} SM$ is invertible.
Let $\pi$ be the natural projection from $SM$ to $M$, which induces a map $\diff \pi$ from $T_{(p,v)} SM$ to $T_{p} M$.
We have the following proposition that relates the conjugate points with the pushforward of the flow $\phi_t$.
\begin{pp}\label{conjugate_J}
    A point $p_2 =\gamma_{p_0,v_0}(t+s) $ is conjugate to $p_1 =\gamma_{p_0,v_0}(t)$ if and only if there exists
    $\zeta_0 \in T_{(p_0, v_0)} SM$
    and $c \in \mathbb{R}$ satisfying
    \begin{equation}\label{zeta_0}
    \diff \pi (\diff \phi_t)(\zeta_0) = 0,\qquad
    \diff \pi (\diff \phi_{t+s})(\zeta_0) + c \dot{\gamma}_{p_0,v_0}(t+s) = 0.
    \end{equation}
\end{pp}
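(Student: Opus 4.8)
The plan is to read off the rank of the polar exponential map based at $p_1$ directly from the pushforward $\diff\phi_\tau$, and then to transport the resulting condition back to $(p_0,v_0)$ using the cocycle property of the flow. Throughout I write $v_1=\dot\gamma_{p_0,v_0}(t)$ and $v_2=\dot\gamma_{p_0,v_0}(t+s)$, so that $(p_1,v_1)=\phi_t(p_0,v_0)$ and $(p_2,v_2)=\phi_s(p_1,v_1)$, and I work with the arc-length parameterization, noting that conjugacy is insensitive to the reparameterization.

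First I would differentiate the identity $\exp_{p_1}(\tau,w)=\gamma_{p_1,w}(\tau)=\pi(\phi_\tau(p_1,w))$ at $(\tau,w)=(s,v_1)$. The $\tau$-derivative is $\diff\pi$ of the generator of $\phi_\tau$, which equals $\dot\gamma_{p_1,v_1}(s)=v_2\neq 0$. The $w$-derivative, for a variation $w(\ep)$ of $v_1$ inside the fiber $S_{p_1}M$, equals $\diff\pi(\diff\phi_s(\hat\xi))$, where $\hat\xi=\frac{\diff}{\diff\ep}(p_1,w(\ep))\big|_{\ep=0}$; since the base point $p_1$ is held fixed, $\hat\xi$ lies in $\ker(\diff\pi)\subset T_{(p_1,v_1)}SM$, the one-dimensional \emph{vertical} subspace tangent to the fiber. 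Because $\dim M=2$, the map $\diff_{\tau,w}\exp_{p_1}$ has maximal rank $2$, and its image is spanned by $v_2$ together with $\diff\pi(\diff\phi_s(\hat\xi))$. Hence $p_2$ is conjugate to $p_1$ exactly when there is a nonzero vertical $\eta_0\in\ker(\diff\pi)$ and a scalar $c$ with $\diff\pi(\diff\phi_s(\eta_0))+c v_2=0$; the key point is that the $\tau$-direction always contributes the nonzero vector $v_2$, so the only possible degeneracy is the fiber variation becoming parallel to $v_2$.

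Next I would transport this to $(p_0,v_0)$. Since $\phi_t$ is a local diffeomorphism, $\diff\phi_t$ is invertible, so I set $\zeta_0=(\diff\phi_t)^{-1}(\eta_0)\in T_{(p_0,v_0)}SM$, and $\zeta_0\neq 0$ iff $\eta_0\neq 0$. The verticality of $\eta_0=\diff\phi_t(\zeta_0)$ is precisely $\diff\pi(\diff\phi_t(\zeta_0))=0$, the first equation of \eqref{zeta_0}. Using the cocycle identity $\diff\phi_{t+s}=\diff\phi_s\circ\diff\phi_t$ gives $\diff\phi_s(\eta_0)=\diff\phi_{t+s}(\zeta_0)$, so the parallelism condition becomes $\diff\pi(\diff\phi_{t+s}(\zeta_0))+c v_2=0$, the second equation of \eqref{zeta_0}. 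Reading the correspondence $\eta_0\leftrightarrow\zeta_0$ in both directions then yields the claimed equivalence, with the understanding that $\zeta_0\neq 0$ is what encodes a genuine rank drop.

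The main obstacle is the justification in the first step: that $\diff_{\tau,w}\exp_{p_1}$ factors as $\diff\pi$ composed with the flow pushforward, and that $\ker(\diff\pi)$ is exactly the tangent space to the fiber $S_{p_1}M$, i.e. the honest space of variations through $\lambda$-geodesics issued from the fixed point $p_1$. Keeping the \emph{vertical} and \emph{horizontal} roles straight here, and checking that the unit-speed reparameterization only rescales the generator and does not alter the rank count, is where care is needed; once this identification is secured, the rank computation and the cocycle rebasing are routine.
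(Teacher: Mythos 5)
Your proof is correct and is essentially the paper's own argument rendered in invariant language: both characterize conjugacy of $p_2$ to $p_1$ by a vertical vector at $(p_1,v_1)$ (the paper's $\zeta_1=(0,\beta_1)$ in coordinates, your $\eta_0\in\ker\diff\pi$) whose image under $\diff\pi\circ\diff\phi_s$ is parallel to $\dot{\gamma}$, and both transport this condition to $(p_0,v_0)$ using the invertibility of $\diff\phi_t$ and the group property $\diff\phi_{t+s}=\diff\phi_s\circ\diff\phi_t$. The only differences are presentational — the paper computes with block Jacobian matrices in local charts where you use the chain rule and the identification of $\ker\diff\pi$ with the fiber tangent space — and your explicit insistence that $\zeta_0\neq 0$ encodes the genuine rank drop makes precise a nontriviality the paper leaves implicit.
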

\begin{proof}
   Let $\zeta_j \in T_{(p_j,v_j)} SM$ for $j = 0, 1, 2$ such that
    \[
    \zeta_2 = \diff \phi_s (\zeta_1) = \diff \phi_{t+s} (\zeta_0),
    \]
    which implies $\zeta_1 = \diff \phi_t (\zeta_0)$ by the group property of $\phi_t$.
    Let $\{p_j^i, v_j^i\}$ be the local charts near $(p_j, v_j)$
    and they induce a natural local basis $\{\partial/\partial p_j^i, \partial/\partial v_j^i\}$ in $T_{(p_j, v_j)} SM$, for $j = 0,1,2$.
    We write $\zeta_j = (\alpha_j, \beta_j)$ in this local coordinates, where $\alpha_j$ can be identified as an element in $T_{p_j}M$ and
    $\beta_j$ in $T_{v_j} S_{p_j} M$.
    Note that $\diff \pi (\zeta_j) = \alpha_j$ for $j = 0,1,2$.
    Consider the representation of $\diff \phi_t$ in these local coordinates near $(p_j,v_j)$ given by the Jacobian matrix
    \[
    J_{(p_j,v_j,t)} =
    \begin{bmatrix}
    \begin{array}{cc}
    \diff_p\exp_{p}(t,v) & \diff_v\exp_{p}(t,v)\\
    \diff_p {\dot{\exp}_{p}(t,v)}&  \diff_v {\dot{\exp}_{p}(t,v)}
    \end{array}
    \end{bmatrix}_{(p_j,v_j,t)}.
    \]
    More explicitly, we have
    \begin{equation}\label{jacobieq1}
    \begin{bmatrix}
    \begin{array}{c}
    \alpha_2\\
    \beta_2
    \end{array}
    \end{bmatrix}
    = J_{(p_1,v_1,s)}
    \begin{bmatrix}
    \begin{array}{c}
    \alpha_1\\
    \beta_1
    \end{array}
    \end{bmatrix}
    =
    J_{(p_0,v_0,t+s)}
    \begin{bmatrix}
    \begin{array}{c}
    \alpha_0\\
    \beta_0
    \end{array}
    \end{bmatrix},
    \end{equation}
    and
    \begin{equation}\label{jacobieq1_2}
    \begin{bmatrix}
    \begin{array}{c}
    \alpha_1\\
    \beta_1
    \end{array}
    \end{bmatrix}
    = J_{(p_0,v_0,t)}
    \begin{bmatrix}
    \begin{array}{c}
    \alpha_0\\
    \beta_0
    \end{array}
    \end{bmatrix}.
    \end{equation}
    Recall that $p_2$ is conjugate to $p_1$ if only if there exists
    $c \in \mathbb{R}$ and $\beta_1 \in T_{v_1} S_{p_1} M$ such that
    \begin{align*}
    d_{s,v} \exp_{p_1}(s,v_1)(c, \beta_1) = 0 \Rightarrow d_v \exp_{p_1}(s,v_1)(\beta_1) = -c \dot{\gamma}_{p_1,v_1}(s),
    \end{align*}
    On the one hand, if such $c, \beta_1$ exist, then we pick $\zeta_1 = (0, \beta_1)$ and
    let $\zeta_0 = J_{(p_0,v_0,t)}^{-1} \zeta_1$.
    In this case,
    by equation (\ref{jacobieq1}) and (\ref{jacobieq1_2}) we have
    \begin{align*}
    & 0 = \alpha_1 = \diff \pi(\diff \phi_t)(\zeta_0) = d_p\exp_{p_0}(t,v_0)(\alpha_0) + d_v\exp_{p_0}(t,v_0)(\beta_0),\\
    & \alpha_2 = \diff \pi(\diff \phi_{t+s})(\zeta_0) =  d_p\exp_{p_1}(s,v_1)(\alpha_1) + d_v\exp_{p_1}(s,v_1)(\beta_1)= -c \dot{\gamma}_{p_1,v_1}(s),
    \end{align*}
    which proves (\ref{zeta_0}).
    On the other hand, if there is $\zeta_0$ such that (\ref{zeta_0}) is true, then we take $(\alpha_1, \beta_1) = J_{(p_0,v_0,t)}(\zeta_0)$.
    By (\ref{zeta_0}) we must have $\alpha_1 =0$ and then $d_v \exp_{p_1}(s,v_1)(\beta_1) = -c \dot{\gamma}_{p_1,v_1}(s)$, which implies that $p_2$ is conjugate to $p_1$.
%
\end{proof}
    In the following, recall some results in \cite[Chapter 4, 7]{Merry2011}.
We consider a moving frame $\{F(x,v), H(x,v), V(x,v)\}$ on $SM$, where $F(x,v)$ is the infinitesimal generator of $\phi_t$,
$H(x,v)$ is the horizontal vector field, and $V(x,v)$ is the vertical vector field.
For a detailed definition of $H$ and  $V$, see \cite[Section 4.1]{Merry2011}.
One can show that
\begin{align}\label{projection_frame}
    \diff \pi(F(x,v)) = v, \quad \diff \pi(H(x,v)) = v^\perp, \quad \diff \pi(V(x,v)) =0.
\end{align}
The $\lambda$-geodesic vector field $F$ is related to the geodesic vector field $X$ by
\[
F = X + \lambda V.
\]
One can show the commutators between them are given by
\begin{align*}
    &[H, V] = X = F- \lambda V,\\
    &[V, F] = H + V(\lambda) V,\\
    &[F, H] = (K - H(\lambda) + \lambda^2) V - \lambda F,
\end{align*}
where $K$ is the Gaussian curvature.

\begin{claim}\label{cl_1}
    For any constant $c$, we can find $\zeta_0' \in T_{(p_0, v_0)} SM$ such that
    \[
    \diff \pi(\diff \phi_{t})(\zeta_0') = c \dot{\gamma}_{p_0,v_0}(t).
    \]
\end{claim}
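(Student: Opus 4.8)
The plan is to choose $\zeta_0'$ to be a multiple of the infinitesimal generator $F$ of the flow $\phi_t$, evaluated at the starting point. The guiding intuition is that the tangent direction along the curve is precisely the flow direction, so pushing forward the generator and projecting should reproduce $\dot{\gamma}$ up to scaling. Concretely, I would set $\zeta_0' = c\, F(p_0, v_0) \in T_{(p_0,v_0)} SM$ and verify the desired identity by a direct computation.

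First I would invoke the elementary but essential fact that a vector field is invariant under its own flow. Since $\phi_t$ is the flow generated by $F$, its pushforward satisfies
\[
\diff \phi_t\big(F(p_0, v_0)\big) = F\big(\phi_t(p_0, v_0)\big) = F(p_1, v_1),
\]
which is just the statement that $F$ is $\phi_t$-related to itself. Applying $\diff \pi$ to both sides and using the projection formula (\ref{projection_frame}), namely $\diff\pi(F(x,v)) = v$, I would then obtain
\[
\diff \pi (\diff \phi_t)(\zeta_0') = c\, \diff \pi\big(\diff \phi_t(F(p_0,v_0))\big) = c\, \diff \pi\big(F(p_1, v_1)\big) = c\, v_1.
\]
Since $(p_1,v_1) = \phi_t(p_0,v_0) = (\gamma_{p_0,v_0}(t), \dot{\gamma}_{p_0,v_0}(t))$ by the definition of the flow, we have $v_1 = \dot{\gamma}_{p_0,v_0}(t)$, and the claim follows immediately.

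The argument has essentially no computational content; the entire difficulty, such as it is, lies in recognizing that the correct choice of $\zeta_0'$ is the generator direction rather than some combination of the horizontal and vertical fields $H$ and $V$. Once this is identified, the flow-invariance of $F$ together with the projection formula does all the work, and the scaling by $c$ passes through linearly. The only point requiring a modicum of care is the invariance relation $\diff\phi_t(F(p_0,v_0)) = F(p_1,v_1)$, which is standard for flows of smooth vector fields and holds here because $\phi_t$ is, by construction, the flow of $F$.
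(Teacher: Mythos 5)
Your proof is correct and follows essentially the same route as the paper: both take $\zeta_0' = c\,F(p_0,v_0)$ and combine the flow-invariance $\diff\phi_t(F(p_0,v_0)) = F(\phi_t(p_0,v_0))$ with the projection formula $\diff\pi(F(x,v)) = v$. The only difference is that the paper proves the invariance explicitly, by checking that $Y(t) = c(\diff\phi_{-t})(F(\phi_t(x,v)))$ has derivative $c[F,F]=0$ and is therefore constant, whereas you invoke it as the standard fact that a vector field is $\phi_t$-related to itself under its own flow.
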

\begin{proof}
    By \cite[Section 7.1]{Merry2011}, we consider the infinitesimal generator $F(x,v)$ of the flow $\phi_t$ and  define
    $$Y(t) = c (\diff \phi_{-t})(F(\phi_t(x,v))).$$
    Differentiating both sides with respect to $t$ implies
    $$
    \dot{Y}(t) = c[F(x,v),F(x,v)] = 0.
    $$
    Thus $Y(t)$ is a constant vector field on $SM$. By denoting it as $Y = \zeta_0'$, we have  $ d\phi_{t}(\zeta_0') = cF(\phi_t(x,v))$.
    With $\diff \pi (F(x,v))  = v$, applying $\diff \pi$ to $ \diff \phi_{t}(\zeta_0')$ claims what we need.
\end{proof}
\subsection{Jacobi Fields}Inspired by Proposition \ref{conjugate_J}, we say a vector field $J_{(p_0,v_0)}(t)$ is a \textit{Jacobi field} along $\gamma_{p_0,v_0}$ if it can be written as
\begin{align}\label{jacobifield_rep}
J_{(p_0,v_0)}(t) = \diff \pi(\diff \phi_t)(\zeta) + (c_1 + c_2t) \dot{\gamma}_{p_0,v_0}(t),
\end{align}
for some $\zeta \in T_{(p_0, v_0)} SM$ and  $c_1, c_2 \in \mathbb{R}$.
We emphasize that the representation of a Jacobi field in form of (\ref{jacobifield_rep}) is not unique, which can be seen from Claim \ref{cl_1} and Proposition \ref{jocabifield_simplied}.
Moreover, this definition is different from the usual one that the Jacobi fields are defined as the variation field along $\gamma_{p_0,v_0}$,
for example see \cite{Assylbekov2017}.
However, they are essentially the same and the definition we choose here is more straightforward for the purpose of this work.
To understand the last term $t \dot{\gamma}_{p_0,v_0}(t)$, recall we extend $\Gamma$ in Section \ref{subsection_extend},
such that for any $V = rv \in T_pM\setminus 0$ with  $r>0$ and $v\in S_pM$, there exists a unique curve $\gamma_{x,V}(t)$ belonging to $\Gamma$.
\begin{pp}
    A point $p_2 = \gamma_{p_0,v_0}(t_2)$ is conjugate to $p_1 = \gamma_{p_0,v_0}(t_1)$ if and only if there exists a nonvanishing Jacobi field $J(t)$ along $\gamma_{p_0,v_0}$  such that $J(t_1) = J(t_2) = 0$.
\end{pp}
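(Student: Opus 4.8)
The plan is to read the statement as a dictionary, through Proposition \ref{conjugate_J}, between two descriptions of the same degeneracy: on one side the pair of conditions \eqref{zeta_0} imposed on a single $\zeta_0\in T_{(p_0,v_0)}SM$, and on the other the existence of a Jacobi field of the form \eqref{jacobifield_rep} with two prescribed zeros. Writing $t=t_1$, $s=t_2-t_1$ so that Proposition \ref{conjugate_J} applies verbatim, the only mismatch between the two formulations is the multiple of $\dot\gamma_{p_0,v_0}$ that \eqref{zeta_0} permits at $t_2$ but forbids at $t_1$, measured against the affine coefficient $(c_1+c_2t)$ that a Jacobi field carries along $\dot\gamma_{p_0,v_0}$. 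I would reconcile these by a two–dimensional computation in $(c_1,c_2)$ together with Claim \ref{cl_1}, which lets me add to any $\zeta$ a correction $\zeta'$ with $\diff\pi(\diff\phi_t)(\zeta')=c\,\dot\gamma_{p_0,v_0}(t)$ for all $t$ and any prescribed constant $c$.

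For the ``if'' direction, given a nonvanishing $J$ with $J(t_1)=J(t_2)=0$, I would use Claim \ref{cl_1} to choose $\zeta'$ with $\diff\pi(\diff\phi_t)(\zeta')=(c_1+c_2t_1)\dot\gamma_{p_0,v_0}(t)$ and set $\zeta_0=\zeta+\zeta'$; then $J(t_1)=0$ becomes exactly the first equation of \eqref{zeta_0}, while $J(t_2)=0$ gives $\diff\pi(\diff\phi_{t_2})(\zeta_0)=-c_2(t_2-t_1)\dot\gamma_{p_0,v_0}(t_2)$, i.e.\ the second equation with $c=c_2(t_2-t_1)$, and Proposition \ref{conjugate_J} delivers the conjugacy. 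For the ``only if'' direction I would start from $\zeta_0,c$ as in \eqref{zeta_0}, solve $c_1+c_2t_1=0$, $c_1+c_2t_2=c$ (uniquely, since $t_1\neq t_2$), and define $J$ by \eqref{jacobifield_rep} with this $\zeta_0$ and these $c_1,c_2$; substituting $t=t_1,t_2$ and using \eqref{zeta_0} shows $J(t_1)=J(t_2)=0$.

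The main obstacle is not this algebra but the \emph{nonvanishing} requirement, which must be threaded through both directions via the nondegeneracy $\beta_1\neq 0$ from the proof of Proposition \ref{conjugate_J}. In the ``if'' direction I would argue $\zeta_0\neq 0$: were $\zeta_0=0$, the relation above would collapse $J$ to $c_2(t-t_1)\dot\gamma_{p_0,v_0}(t)$, whose vanishing at $t_2$ forces $c_2=0$ and hence $J\equiv 0$, against hypothesis; since $\diff\phi_{t_1}$ is invertible and $\diff\pi(\diff\phi_{t_1})(\zeta_0)=0$, the vector $(\alpha_1,\beta_1)=J_{(p_0,v_0,t_1)}\zeta_0$ then has $\alpha_1=0$ but is nonzero, so $\beta_1\neq 0$ and the conjugacy is genuine. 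In the ``only if'' direction I would instead verify that the constructed $J$ is not identically zero: decomposing $\diff\phi_t(\zeta_0)=a(t)F+b(t)H+e(t)V$ in the moving frame and using \eqref{projection_frame} gives $J(t)=\big(a(t)+c_1+c_2t\big)\dot\gamma_{p_0,v_0}(t)+b(t)\dot\gamma^{\perp}_{p_0,v_0}(t)$, so $J\equiv 0$ would force $b\equiv 0$. Here I would invoke the Jacobi equation for $\lambda$-geodesic flows in this frame (\cite[Chapter 7]{Merry2011}), whose horizontal component has the form $\dot b=e+(\text{terms linear in }a,b)$, the $e$–coupling arising from $[V,F]=H+V(\lambda)V$; evaluating at $t_1$, where $a(t_1)=b(t_1)=0$ because $\diff\pi(\diff\phi_{t_1})(\zeta_0)=0$, yields $0=\dot b(t_1)=e(t_1)$, whence $\diff\phi_{t_1}(\zeta_0)=0$ and $\zeta_0=0$, contradicting $\beta_1\neq 0$. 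This frame computation — showing that the $H$–$V$ coupling prevents a nonzero initial vertical vector from generating a $\dot\gamma_{p_0,v_0}$-parallel Jacobi field — is the step I expect to require the most care.
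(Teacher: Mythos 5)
Your proposal is correct and follows essentially the same route as the paper: both directions translate between the two zeros of the Jacobi field and the conditions \eqref{zeta_0} of Proposition \ref{conjugate_J}, using Claim \ref{cl_1} to absorb the tangential term at $t_1$ and the affine coefficient $c\,\frac{t-t_1}{t_2-t_1}$ (your $c_1+c_2t$ with $c_1+c_2t_1=0$, $c_1+c_2t_2=c$) to handle the one at $t_2$. Your extra checks that the constructed objects are nontrivial — $\zeta_0\neq 0$ via the collapse to $c_2(t-t_1)\dot\gamma$, and $J\not\equiv 0$ via the frame equation $\dot b = e$ from Proposition \ref{dphi_t} — are points the paper leaves implicit, and they are carried out correctly.
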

\begin{proof}
    If $p_2 = \gamma_{p_0,v_0}(t_2)$ is conjugate to $p_1 = \gamma_{p_0,v_0}(t_1)$,
    then by Proposition \ref{conjugate_J}, there exists $\zeta_0$ and $c$  such that
    \begin{equation*}
        \diff \pi (\diff \phi_{t_1})(\zeta_0) = 0,\qquad
        \diff \pi (\phi_{t_2})(\zeta_0) + c \dot{\gamma}_{p_0,v_0}(t_2) = 0.
    \end{equation*}
    Let $J_{(p_0,v_0)}(t) = \diff \pi(\diff \phi_t) (\zeta_0) + c \frac{t-t_1}{t_2 - t_1}  \dot{\gamma}_{p_0,v_0}(t)$.
    Then we have $J(t_1) = J(t_2) = 0$.

    Conversely, suppose there is a nonzero Jacobi field with $J_{(p_0,v_0)}(t_1)=J_{(p_0,v_0)}(t_2)=0$. More explicitly, we have
    \begin{align}\label{conjugate equation}
    \begin{split}
    &\diff \pi(\diff \phi_{t_1})(\zeta_0) + \lambda_1\dot{\gamma}_{p_0,v_0}(t_1) = 0, \\
    &\diff \pi(\diff \phi_{t_2})(\zeta_0) + \lambda_2\dot{\gamma}_{p_0,v_0}(t_2) = 0,
    \end{split}
    \end{align}
    where $\lambda_1 = c_1+c_2t_1,\lambda_2 = c_1+c_2t_2$.
    It suffices to find $\zeta_0'$ such that $\diff \pi(\diff \phi_{t_1})(\zeta_0') = \lambda_1 \dot{\gamma}_{p_0,v_0}(t)$.
     Such $\zeta_0'$ can be found by Claim \ref{cl_1}.
    In this way, the term with $\dot{\gamma}_{p_0,v_0}(t_1)$ in the first equation can replaced by considering $\tilde{\zeta}_0 = \zeta_0 + \zeta_0'$.
    By Proposition \ref{conjugate_J} we have $p_2$ is conjugate to $p_1$.
\end{proof}
The same conclusion is proved in \cite[Theorem 5.3]{Assylbekov2017} based on the definition of the Jacobi fields as the variational fields.

\subsection{The Jacobi equation.}
In this subsection, we show the following proposition in the case of $\lambda$-geodesic flow, analogous to \cite[Proposition 4.14 ]{Merry2011} in the case of the geodesic flow.
\begin{pp}[{\cite[Lemma 4.6]{Assylbekov2017}}]\label{dphi_t}
For any $\zeta \in T_{(p,v)} SM$, if we write $\diff \phi_t(\xi)$
\[
\diff \phi_t(\zeta) = x(t) F(\phi_t(x,v)) + y(t) H(\phi_t(x,v)) + z(t)V(\phi_t(x, v)),
\]
in the moving frame $\{F, H, V\}$,
then the smooth functions $x,y,z$ should satisfy
\begin{align*}
&\dot{x} = \lambda y,\\
&\dot{y} = z,\\
& \dot{z} = -({K-H(\lambda)  + \lambda^2}) y + V(\lambda) z.
\end{align*}
\end{pp}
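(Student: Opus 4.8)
The plan is to regard $W(t) := \diff \phi_t(\zeta)$ as the linearised flow and to extract the ODE for its frame coefficients $(x,y,z)$ directly from the commutation relations of the generator $F$ with the frame. First I would pull $W(t)$ back to the fixed tangent space $T_{(p,v)}SM$: set $\tilde F(t) := \diff \phi_{-t}\big(F|_{\phi_t(p,v)}\big)$ and define $\tilde H(t),\tilde V(t)$ analogously. Because $\diff \phi_{-t}$ is a linear isomorphism, $\{\tilde F(t),\tilde H(t),\tilde V(t)\}$ is a basis of $T_{(p,v)}SM$ for every $t$, and applying $\diff \phi_{-t}$ to the given expansion of $W(t)$ yields $\zeta = x(t)\tilde F(t) + y(t)\tilde H(t) + z(t)\tilde V(t)$, whose left-hand side is independent of $t$.

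The single structural input I need is the flow identity $\frac{\diff}{\diff t}\diff \phi_{-t}\big(E|_{\phi_t(p,v)}\big) = \diff \phi_{-t}\big([F,E]|_{\phi_t(p,v)}\big)$, valid for any smooth vector field $E$ on $SM$; this is just $\mathcal{L}_F E = [F,E]$ combined with the group law $\phi_{t+s} = \phi_t\circ\phi_s$. Feeding $E = F,H,V$ into this identity and expanding the brackets in the moving frame by means of the commutators recorded in the moving-frame discussion above — namely $[F,F] = 0$, $[F,H] = -\lambda F + (K - H(\lambda) + \lambda^2)V$, and $[F,V] = -[V,F] = -H - V(\lambda)V$ — produces $\dot{\tilde F} = 0$, $\dot{\tilde H} = -\lambda\tilde F + (K - H(\lambda)+\lambda^2)\tilde V$, and $\dot{\tilde V} = -\tilde H - V(\lambda)\tilde V$, where all scalar coefficients are evaluated at $\phi_t(p,v)$.

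I would then differentiate the constant relation $\zeta = x\tilde F + y\tilde H + z\tilde V$ in $t$, substitute the three derivatives just obtained, and collect the coefficients of $\tilde F$, $\tilde H$, $\tilde V$. Since these three vectors are linearly independent, each collected coefficient must vanish, and reading off the $\tilde F$-, $\tilde H$- and $\tilde V$-components gives exactly $\dot x = \lambda y$, $\dot y = z$, and $\dot z = -(K - H(\lambda) + \lambda^2)y + V(\lambda)z$, as claimed.

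I expect the only genuine pitfalls to be bookkeeping rather than conceptual: getting the sign of the flow identity right (it is $+[F,E]$, not $-[F,E]$, precisely because $\diff \phi_{-t}$ acts on $E|_{\phi_t}$), and correctly passing from the stated $[V,F]$ to $[F,V] = -[V,F]$ before expanding. As a consistency check I would note that eliminating $z$ from the system gives the scalar Jacobi equation $\ddot y - V(\lambda)\dot y + (K - H(\lambda) + \lambda^2)y = 0$ for the normal component $y$ of the projected field $\diff \pi(W(t)) = x\,\dot\gamma + y\,\dot\gamma^{\perp}$, which collapses to the classical $\ddot y + Ky = 0$ when $\lambda \equiv 0$; any sign slip in the commutators would show up visibly here.
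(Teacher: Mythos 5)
Your proof is correct. Note that the paper itself gives no argument for this proposition at all: it is quoted directly from \cite[Lemma 4.6]{Assylbekov2017}, so there is no in-paper proof to compare against. Your argument — pulling the frame back along the flow via $\tilde E(t) = \diff\phi_{-t}\bigl(E|_{\phi_t(p,v)}\bigr)$, invoking $\tfrac{\diff}{\diff t}\,\tilde E(t) = \diff\phi_{-t}\bigl([F,E]|_{\phi_t(p,v)}\bigr)$, and reading off coefficients from the constancy of $\zeta = x\tilde F + y\tilde H + z\tilde V$ — is exactly the standard Lie-derivative computation used in the cited reference and in the geodesic-flow analogue (\cite[Proposition 4.14]{Merry2011}), with all signs ($[F,H] = -\lambda F + (K - H(\lambda) + \lambda^2)V$, $[F,V] = -H - V(\lambda)V$) handled correctly; the consistency check recovering \eqref{Jeq_y} and the classical equation $\ddot y + Ky = 0$ at $\lambda \equiv 0$ confirms this.
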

Combining the last two equations above, one can see that $y$ should satisfy a second-order ODE.
Notice $\dot{\gamma}(t)^{\perp}, \dot{\gamma}(t)$ give us a moving frame along $\gamma_{p_0,v_0}(t)$.
This proves the following proposition analogous to \cite[Lemma 5.2]{Assylbekov2017}.
\begin{pp} \label{jacobi_eq}
If $J(t)$ is a Jacobi field along $\gamma_{p_0,v_0}$ expressed as
\[
J(t) = x(t) \dot{\gamma}_{p_0,v_0}(t) + y(t)\dot{\gamma}_{p_0,v_0}^\perp(t),
\]
then $x(t), y(t)$ satisfy the Jacobi equations
\begin{align}
&\dot{x} = \lambda y + c, \label{Jeq_x}\\
&\ddot{y} - V(\lambda) \dot{y} + (K-H(\lambda) + \lambda^2)y = 0, \label{Jeq_y}
\end{align}
where $c$ is a constant.
\end{pp}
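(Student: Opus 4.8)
The plan is to obtain the two scalar equations by transporting the frame relations of Proposition \ref{dphi_t} on $SM$ down to the frame $\{\dot{\gamma}, \dot{\gamma}^{\perp}\}$ along the curve via the projection $\diff \pi$. By the definition (\ref{jacobifield_rep}), any Jacobi field takes the form $J_{(p_0,v_0)}(t) = \diff \pi(\diff \phi_t)(\zeta) + (c_1 + c_2 t)\dot{\gamma}_{p_0,v_0}(t)$ for some $\zeta \in T_{(p_0,v_0)}SM$ and constants $c_1, c_2$. First I would apply Proposition \ref{dphi_t} to this $\zeta$ and expand $\diff \phi_t(\zeta) = a(t) F(\phi_t) + b(t) H(\phi_t) + e(t) V(\phi_t)$, so that the coefficients satisfy $\dot{a} = \lambda b$, $\dot{b} = e$, and $\dot{e} = -(K - H(\lambda) + \lambda^2) b + V(\lambda) e$.

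The next step is to push this expression forward by $\diff \pi$. Since $\phi_t(p_0,v_0) = (\gamma_{p_0,v_0}(t), \dot{\gamma}_{p_0,v_0}(t))$, the base direction at the moving point is $\dot{\gamma}(t)$, so the projection identities (\ref{projection_frame}) evaluated there read $\diff \pi(F(\phi_t)) = \dot{\gamma}(t)$, $\diff \pi(H(\phi_t)) = \dot{\gamma}^{\perp}(t)$, and $\diff \pi(V(\phi_t)) = 0$. Hence $\diff \pi(\diff \phi_t)(\zeta) = a(t)\dot{\gamma}(t) + b(t)\dot{\gamma}^{\perp}(t)$, and adding the tangential term gives $J(t) = (a(t) + c_1 + c_2 t)\dot{\gamma}(t) + b(t)\dot{\gamma}^{\perp}(t)$. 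Matching this against the expression $J(t) = x(t)\dot{\gamma}(t) + y(t)\dot{\gamma}^{\perp}(t)$ identifies $x = a + c_1 + c_2 t$ and $y = b$.

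Finally I would differentiate. For the tangential component, $\dot{x} = \dot{a} + c_2 = \lambda b + c_2 = \lambda y + c_2$, which is (\ref{Jeq_x}) with $c = c_2$. For the normal component, $\dot{y} = \dot{b} = e$, and differentiating once more yields $\ddot{y} = \dot{e} = -(K - H(\lambda) + \lambda^2) b + V(\lambda) e = -(K - H(\lambda) + \lambda^2) y + V(\lambda)\dot{y}$, which rearranges to (\ref{Jeq_y}). There is no serious obstacle here; the only points requiring care are that the projection formulas (\ref{projection_frame}) must be read off at the moving point $\phi_t(p_0,v_0)$, where the horizontal direction is $\dot{\gamma}(t)^{\perp}$ rather than the fixed $v_0^{\perp}$, and that the reparameterization term $(c_1 + c_2 t)\dot{\gamma}$ coming from the extension in Section \ref{subsection_extend} contributes only to the tangential equation, accounting precisely for the additive constant $c$ in (\ref{Jeq_x}).
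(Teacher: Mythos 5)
Your proposal is correct and follows essentially the same route as the paper: the paper's proof is exactly the two-line argument of applying $\diff \pi$ to Proposition \ref{dphi_t} and invoking (\ref{projection_frame}) at the moving point $\phi_t(p_0,v_0)$, which you have simply carried out in full detail. Your identification of the tangential coefficient $x = a + c_1 + c_2 t$ (so that $c = c_2$ in (\ref{Jeq_x})) and the observation that the vertical component projects to zero match the intended argument precisely.
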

\begin{remark}
If $\lambda=0$, we have the case of geodesics.
Compared to \cite[Lemma 5.2]{Assylbekov2017}, we have an extra constant $c$  in  (\ref{Jeq_x}), since by extending $\Gamma$ we include $t \dot{\gamma}(t)$ in our definition of the Jacobi field.
\end{remark}
\begin{proof}
Recall  $J(t)$ has the representation
\[
J_{(p_0,v_0)}(t) = \diff \pi(\diff \phi_t)(\zeta) + (c_1 + c_2t) \dot{\gamma}_{p_0,v_0}(t).
\]
Applying $\diff \pi$ to Proposition \ref{dphi_t}, we have the desired result by (\ref{projection_frame}).
\end{proof}

\section{Microlocal analysis of the local problem}\label{sec_microlocal}
\subsection{Parameterization of $\mathcal{M}$}
Since the group action of $\mathbb{R}$ on $M$ by $\phi_t$ is free and proper,
the family of curves form a smooth manifold $\mathcal{M} = SM/\phi_t$ of dimension $2\times 2-2=2$.
We parameterize $\mathcal{M}$ below near a fixed curve $\gamma \in \Gamma $, in the same way for the case of the geodesics in \cite[Section 3.1]{Monard2015}.
This parameterization gives us a local charts of the manifold of curves.

For this purpose,  for fixed $\gamma$ we choose a hypersurface $H$ such that $\gamma$ hits $H$ transversally at $p_0 = \gamma(0)$ and $v_0 = \dot{\gamma}(0)$.
Let $\mathcal{H}$ consist of all $(p, v) \in SM$ in a small neighborhood of $(p_0, v_0)$,
such that $p \in H$ and $v$ is transversal to $H$.
Then we can parameterize all curves near fixed $\gamma$
by the intersection point $p$ and the direction $v$, i.e., by an element $(p,v) \in \mathcal{H}$.
Suppose $H$ is locally given by $x^1=0$ and has coordinate $y$.
We can write each element in $\mathcal{H}$ using $(y, \eta)$ instead, where $\eta$ is the parameterization of $v \in S_{(0,y)}M$.
In the following, we use the new notation $\gamma(y, \eta, t)$ to denote the smooth curve starting from $p = (0,y)$ in the direction $v$.
In some cases, we write $\gamma(t), \frac{\partial \gamma}{\partial y}(t), \frac{\partial \gamma}{\partial \eta}(t)$ and omit the variables $y, \eta$ for simplification, if there is no confusion caused.
\begin{pp}\label{jocabifield_simplied}
    With this parameterization, the Jacobi field $J(t)$ along $\gamma(y, \eta, t)$ can be simplified as
    \[
    J(t) =  \kappa_1 \frac{\partial \gamma}{\partial y}(t)
    + \kappa_2 \frac{\partial \gamma}{\partial \eta}z(t) + c_1\dot{\gamma}(t) + c_2t \dot{\gamma}(t),
    \]
    where $\kappa_1, \kappa_2, c_1, c_2 \in \mathbb{R}$.
\end{pp}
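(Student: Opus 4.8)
The plan is to realize the two variation fields $\partial\gamma/\partial y$ and $\partial\gamma/\partial\eta$ as the images under $\diff\pi\circ\diff\phi_t$ of a pair of tangent vectors to $\mathcal{H}$, and then to show that these two vectors together with the flow generator $F(p_0,v_0)$ form a basis of $T_{(p_0,v_0)}SM$. Since every Jacobi field has the form (\ref{jacobifield_rep}), namely $\diff\pi(\diff\phi_t)(\zeta)+(c_1+c_2t)\dot\gamma(t)$ for some $\zeta\in T_{(p_0,v_0)}SM$, expanding $\zeta$ in this basis will immediately produce the claimed expression.

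First I would set up the parameterization map explicitly. Writing $\sigma(y,\eta)=(p(y,\eta),v(y,\eta))\in\mathcal{H}\subset SM$ for the element of $\mathcal{H}$ attached to the curve $\gamma(y,\eta,\cdot)$, we have $\gamma(y,\eta,t)=\pi(\phi_t(\sigma(y,\eta)))$ by the definition of the flow. Differentiating in $y$ and in $\eta$ and using the chain rule gives
\[
\frac{\partial\gamma}{\partial y}(t)=\diff\pi(\diff\phi_t)(\zeta_y),\qquad
\frac{\partial\gamma}{\partial\eta}(t)=\diff\pi(\diff\phi_t)(\zeta_\eta),
\]
where $\zeta_y=\partial\sigma/\partial y$ and $\zeta_\eta=\partial\sigma/\partial\eta$ lie in $T_{(p_0,v_0)}\mathcal{H}$ and form a basis of it, since $(y,\eta)$ are coordinates on $\mathcal{H}$.

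Next I would verify that $\{F(p_0,v_0),\zeta_y,\zeta_\eta\}$ is a basis of the three-dimensional space $T_{(p_0,v_0)}SM$. This is where the choice of $H$ enters. Applying $\diff\pi$ and using (\ref{projection_frame}) one has $\diff\pi(F(p_0,v_0))=v_0$, while $\diff\pi(\zeta_\eta)=0$ (varying the direction only) and $\diff\pi(\zeta_y)$ is tangent to $H$ (the foot point $p(y,\eta)$ moves along $H$). Because $\gamma$ meets $H$ transversally, $v_0$ is transversal to $H$, so $v_0$ and $\diff\pi(\zeta_y)$ are independent in $T_{p_0}M$; a short linear-independence argument (apply $\diff\pi$ to a vanishing linear combination, then use $\zeta_\eta\neq 0$) then shows the three vectors are independent, hence span. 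I expect this transversality/basis step to be the main—though modest—obstacle, as it is the one place where the hypothesis on $H$ is genuinely used.

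Finally, given an arbitrary Jacobi field, I would write $\zeta=a\,F(p_0,v_0)+\kappa_1\zeta_y+\kappa_2\zeta_\eta$. Claim \ref{cl_1}, equivalently the invariance of the generator under its own flow $\diff\phi_t(F(p_0,v_0))=F(\phi_t(p_0,v_0))$, together with (\ref{projection_frame}) yields $\diff\pi(\diff\phi_t)(F(p_0,v_0))=\dot\gamma(t)$. Substituting into (\ref{jacobifield_rep}) then produces
\[
J(t)=\kappa_1\frac{\partial\gamma}{\partial y}(t)+\kappa_2\frac{\partial\gamma}{\partial\eta}(t)+(a+c_1)\dot\gamma(t)+c_2t\,\dot\gamma(t),
\]
which is exactly the asserted form after relabeling the constant multiplying $\dot\gamma(t)$.
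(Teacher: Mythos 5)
Your proof is correct, but it is organized differently from the paper's. The paper works in the local coordinates $(x^1,y,\eta)$ on $SM$ and expands $\diff \pi(\diff \phi_t)(\zeta)$ as $\kappa_1\,\partial\gamma/\partial y+\kappa_2\,\partial\gamma/\partial x^1+\kappa_3\,\partial\gamma/\partial\eta$; the whole difficulty is then to eliminate the $\partial\gamma/\partial x^1$ term. This is done by taking the identically vanishing field $S(t)=\diff \pi(\diff \phi_t)(\zeta')+c\,\dot\gamma\equiv 0$ supplied by Claim \ref{cl_1} with $c\neq 0$, pairing $S(0)$ with $\dot\gamma^\perp(0)$, and using the initial conditions (\ref{eq_initial}) to show that the coefficient of $\partial\gamma/\partial x^1$ in that relation is nonzero, so that $\partial\gamma/\partial x^1$ is itself a combination of $\partial\gamma/\partial y$, $\partial\gamma/\partial\eta$ and $\dot\gamma$, which is then substituted back. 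You never introduce the unwanted term at all: you decompose $T_{(p_0,v_0)}SM=\mathbb{R}F\oplus T_{(p_0,v_0)}\mathcal{H}$, prove independence by applying $\diff\pi$ and invoking transversality of $v_0$ to $H$ (the same geometric input that gives the second condition in (\ref{eq_initial})), and use the flow-invariance $\diff\phi_t(F(p_0,v_0))=F(\phi_t(p_0,v_0))$ --- which is precisely the computation inside Claim \ref{cl_1} --- to see that the $F$-component of $\zeta$ only ever contributes a multiple of $\dot\gamma(t)$, absorbed into $c_1$. So both arguments consume exactly the same two ingredients, Claim \ref{cl_1} and transversality of $\gamma$ to $H$, but your basis is adapted to the geometry, so the elimination is automatic and the argument is slightly cleaner and more invariant; the paper's coordinate version has the mild advantage of staying in the representation of $\diff\phi_t$ that is reused later for the $a_1,b_1$ computations of the canonical relation.
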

\begin{proof}
    It is shown in Claim \ref{cl_1} that for any $c \in \mathbb{R}$, there exists $\zeta \in T_{(p,v)}SM $ such that
    $$
    S(t) = \diff \pi(\diff \phi_t)(\zeta) + c \dot{\gamma}(y, \eta, t) \equiv 0,
    $$
    where in local coordinates $p =(0,y)$ and we assume $c \neq 0$.
    On the other hand, in the local coordinates $(x^1,y, \eta)$ of $SM$, 
    the pushforward $\diff \phi_t$ has a representation and
    we can write
    \[
    \diff \pi(\diff \phi_t)(\zeta) = \kappa_1 \frac{\partial \gamma}{\partial y} +
    \kappa_2 \frac{\partial \gamma}{\partial x^1} + \kappa_3 \frac{\partial \gamma}{\partial \eta}.
    \]
    Then it suffices to show that $\kappa_2 \neq 0$ and therefore $\frac{\partial \gamma}{\partial x}$ can be written as a linear combination of $\frac{\partial \gamma}{\partial y},\frac{\partial \gamma}{\partial \eta},\dot{\gamma}$.
    Indeed, we have the following initial conditions
    \begin{equation}\label{eq_initial}
    \frac{\partial \gamma}{\partial \eta}(0) = 0, \quad (\frac{\partial \gamma}{\partial y}(0),\dot{\gamma}^\perp(0))_g \neq 0,
    \end{equation}
    where the first one comes from $\gamma(y, \eta, 0) \equiv p$ and the second one is from the assumption that $\gamma(y, \eta, t)$ hit $H$ transversally at $t=0$.
    Now we compute
    \[
    0 \equiv  (S(0),\dot{\gamma}^\perp(0))
    =\kappa_1 (\frac{\partial \gamma}{\partial y}(0),\dot{\gamma}^\perp(0))_g +
    \kappa_2 (\frac{\partial \gamma}{\partial x^1}(0),\dot{\gamma}^\perp(0))_g.
    \]
    It follows that $\kappa_2 \neq 0$, otherwise one has $\kappa_2 = \kappa_1 = 0$,
    which leads to \[
    \kappa_3 \frac{\partial \gamma}{\partial \eta} + c \dot{\gamma}(y, \eta, t) = 0.
    \]
    This contradicts with the initial condition in (\ref{eq_initial}) and that $\dot{\gamma}(y,\eta,t)$ is nonvanishing.
\end{proof}
\subsection{$I_w$ as an FIO}
The same analysis for the geodesic transform in \cite[Section 3.1]{Monard2015} works for $I_w$.
We briefly state these arguments for $I_w$ in this subsection.
For more details see \cite[Section 3.1]{Monard2015}.

We consider the curve-point relation
\[
Z =  \{(\gamma(y,\eta, t), x) \in \mathcal{M} \times M: \ x \in \gamma \},
\]
from the double fibration point of view of \cite{Gelextquotesinglefand1969,Helgason2013}.
Note that $Z$ is a smooth manifold
with the conormal bundle
\[
N^* Z = \{(\gamma, \hat{\gamma}, x, \xi)\in T^*(\mathcal{M} \times M)\setminus 0:  \   (\hat{\gamma}, \xi) \text{ vanishes on } T_{(\gamma, x)} Z_0\}.
\]
The integral transform along curves in $\Gamma$ defined by
\[
I_w f (\gamma)= \int w(\gamma(t),\dot{\gamma}(t)) f(\gamma(t)) \diff t, \quad f \in \mathcal{E}'(M),\  \gamma \in \Gamma.
\]
has a delta-type Schwartz kernel $w \delta_{Z}$, which is a conormal distribution supported in $N^*Z$.
Therefore, it is an FIO of order $-{1}/{2}$ and associated with the canonical relation $C = N^*Z'$, the twisted version of $N^*Z$ given by
\[
C = \{(\gamma, \hat{\gamma}, x, -\xi): \ \{(\gamma, \hat{\gamma}, x, \xi)\in N^*Z\}.
\]
By the local parameterization of $\mathcal{M}$, we can show $(y,\eta,\hat{y},\hat{\eta},x,\xi) \in C$ if and only if there exists $t$ with $x = \gamma(y,\eta,t)$ and
\begin{align}\label{conditioncanonical}
\xi_i \dot{\gamma}^i = 0,
\quad \xi_i \frac{\partial \gamma^i}{\partial y}= \hat{y},
\quad \xi_i \frac{\partial \gamma^i}{\partial \eta}= \hat{\eta},
\end{align}
where the Einstein summation convention is used.
\subsection{The description of $C$}
Inspired by \cite[Section 3.2]{Monard2015}, in this subsection, we write $C$ in a more explicit form and describe its properties.
Let \[
e_1(t) = \dot{\gamma}(t)^{\perp} , \quad e_2(t) = \dot{\gamma}(t)
\] be a moving frame.
We have the corresponding dual basis $\{e^1(t), e^2(t)\}$.
If we regard $\xi$ as a function of $t$, then the first condition in (\ref{conditioncanonical}) means
\[
\xi(t) = h(t) e^1(t),
\]
for some nonvanishing function $h(t)$.
Suppose we have the following expansion w.r.t the frame
\begin{align}
\frac{\partial \gamma}{\partial y} = a_1(t)e_1(t) + a_2(t) e_2(t), \quad
\frac{\partial \gamma}{\partial \eta} = b_1(t)e_1(t) + b_2(t) e_2(t).
\end{align}
The second and third conditions in (\ref{conditioncanonical}) imply
\begin{align*}\label{secondthirdcon}
h(t) a_1(t) = \hat{y}, \quad h(t) b_1(t) = \hat{\eta}.
\end{align*}
Therefore, the canonical relation has the local representation
\begin{align*}
C = \{ (y,\eta,
\underbrace{\mu a_1(t,y,\eta)}_{\hat{y}},
\underbrace{\mu b_1(t,y,\eta)}_{\hat{\eta}},
\gamma(t,y,\eta),
\mu \dot{\gamma}^\perp (t,y,\eta) ): \
 (y, \eta) \in \mathcal{H}, \ \mu \neq 0 \},
\end{align*}
where
\begin{equation}
a_1(t,y,\eta) = (e_1(t), \frac{\partial \gamma(t,y,\eta)}{\partial y})_g, \quad
b_1(t,y,\eta) = ( e_1(t), \frac{\partial \gamma(t,y,\eta)}{\partial \eta})_g \label{jeq_ab}
\end{equation}
are projections along $\dot{\gamma}^\perp(t,y,\eta)$.
The canonical relation $C$ can be parameterized by $(y,\eta, t, \mu )$ and therefore we have $\dim C = 4$.

\begin{pp}\label{pp_C}
    Let $\pi_M: C \rightarrow T^*M\setminus 0 $ and $\pi_\mathcal{M}: C \rightarrow T^*\mathcal{M}\setminus 0 $ be the natural projections.
    Then
    \begin{itemize}
        \item[(a)] $\pi_M: (y, \eta, \hat{y}, \hat{\eta}, x, \xi) \mapsto  (x, \xi) $ is a diffeomorphism,
        \item[(b)] $\pi_\mathcal{M}: (y, \eta, \hat{y}, \hat{\eta}, x, \xi) \mapsto  (y, \eta, \hat{y}, \hat{\eta})$ is a local diffeomorphism.
    \end{itemize}
\end{pp}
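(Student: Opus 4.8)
The plan is to handle the two projections separately, in each case reducing the claim to a property of the global parameterization $(y,\eta,t,\mu)$ of $C$.

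For part (a) I would factor $\pi_M$ through the circle bundle. Set $\Psi(y,\eta,t)=\phi_t(0,y,\eta)=(\gamma(t,y,\eta),\dot\gamma(t,y,\eta))\in SM$. Since $\gamma$ meets $H$ transversally at $(p_0,v_0)$, the slice $\mathcal H$ is a local cross-section for the flow $\phi_t$, so $\Psi$ is a flow-box diffeomorphism near $t=0$, and the group law of $\phi_t$ propagates this along the curve. Writing $v=\dot\gamma(t,y,\eta)$, we have $\pi_M(y,\eta,t,\mu)=(x,\xi)=(\gamma(t,y,\eta),\mu\,v^{\perp})$, where the metric identifies $v^{\perp}=\dot\gamma^{\perp}$ with a covector. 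The auxiliary map $(x,v,\mu)\mapsto(x,\mu v^{\perp})$ is, fiberwise over $x$, polar-type coordinates on $T^*_xM\setminus 0$ and hence a local diffeomorphism; composing with $\Psi$ in the first three variables shows $\pi_M$ is a local diffeomorphism. For injectivity I would run the construction backwards: from $(x,\xi)$ the direction $v$ is recovered as the unit vector orthogonal to $\xi$, the sign being pinned down by staying in the chart near $v_0$; then property (A1) selects the unique curve through $(x,v)$, giving a unique $(y,\eta,t)=\Psi^{-1}(x,v)$, and finally $\mu$ is read off from $\xi=\mu v^{\perp}$. Thus $\pi_M$ is a diffeomorphism onto its image.

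For part (b), since the first two coordinates are left unchanged, it suffices to show that for fixed $(y,\eta)$ the map $(t,\mu)\mapsto(\hat y,\hat\eta)=(\mu a_1(t),\mu b_1(t))$ is a local diffeomorphism, i.e. that its Jacobian determinant $\mu(\dot a_1 b_1-a_1\dot b_1)=-\mu W$ is nonzero, where $W=a_1\dot b_1-\dot a_1 b_1$ is the Wronskian of $a_1,b_1$. As $\mu\neq 0$ on $C$, everything reduces to $W\neq 0$. The key observation is that $a_1$ and $b_1$ from (\ref{jeq_ab}) are precisely the $\dot\gamma^{\perp}$-components, in the orthonormal frame $\{\dot\gamma^{\perp},\dot\gamma\}$, of the Jacobi fields $\partial\gamma/\partial y$ and $\partial\gamma/\partial\eta$; by Proposition \ref{jacobi_eq} both therefore solve the single scalar second-order linear equation (\ref{Jeq_y}). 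By Abel's identity the Wronskian of two solutions of (\ref{Jeq_y}) is either identically zero or nowhere zero, so it is enough to evaluate $W$ at $t=0$. There the initial conditions (\ref{eq_initial}) give $\partial\gamma/\partial\eta(0)=0$, hence $b_1(0)=0$, while $a_1(0)=(\dot\gamma^{\perp}(0),\partial\gamma/\partial y(0))_g\neq 0$ by transversality. Moreover $\dot b_1(0)=(\dot\gamma^{\perp}(0),\partial_\eta\dot\gamma(0))_g\neq 0$, since varying the direction of a unit vector moves it along $\dot\gamma^{\perp}$; equivalently $\partial\gamma/\partial\eta$ is a nontrivial Jacobi field vanishing at $t=0$, so its derivative there cannot vanish. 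Hence $W(0)=a_1(0)\dot b_1(0)-\dot a_1(0)b_1(0)=a_1(0)\dot b_1(0)\neq 0$, so $W$ is nowhere zero, which proves (b).

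I expect the Wronskian non-vanishing in (b) to be the main point, as it is where the geometry — the Jacobi equation and the eventual role of conjugate points — actually enters; part (a) is essentially a flow-box argument together with the bookkeeping needed to reconstruct $(y,\eta,t,\mu)$. The one delicate spot is the sign ambiguity in recovering $v$ from $\xi$ in (a), which is harmless only because we work in a fixed local chart around $(p_0,v_0)$; away from such a chart $\pi_M$ would be two-to-one.
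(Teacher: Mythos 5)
Your proposal is correct and takes essentially the same route as the paper: part (a) is proved by inverting through the flow $\phi_t$ and property (A1) back to the cross-section $\mathcal{H}$, and part (b) by reducing $\det \diff\pi_\mathcal{M} = \pm\mu W$ to the nonvanishing of the Wronskian of $a_1, b_1$, using that both solve (\ref{Jeq_y}), Abel's identity, and the initial conditions (\ref{eq_initial}). Your direct evaluation $W(0) = a_1(0)\dot{b}_1(0) \neq 0$ is in fact slightly tighter than the paper's linear-dependence contradiction (which writes $a_1 = c\,b_1$ and so tacitly excludes the degenerate case $b_1 \equiv 0$), and your closing remark that the chart near $v_0$ is what pins down the sign of $v$, $\pi_M$ being two-to-one globally, is an accurate observation about the local nature of the statement.
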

\begin{proof}
We prove (a) first. For each $(x,\xi)\in T^*M\setminus 0$, with
the property (A1),  there is a unique curve $\gamma$ passing $x$ at time $t$ and conormal to $\xi$.
Suppose $\gamma$ hits $H$ transversally at $y$ in the direction parameterized by $\eta$, at $t = 0$.
Then $(y,\eta)$ can be derived from the flow $\phi_{-t}(x,v)$, composed with 
the restriction on $H$, where $v$ is the unit vector corresponds to $\xi$.
Thus, in a small neighborhood $(y,\eta)$ depends on $(x,\xi)$ smoothly and  $(\hat{y},\hat{\eta})=(\mu a_1(t,y,\eta), \mu b_1(t,y,\eta))$ with $\mu = |\xi|$ also depends on $(x,\xi)$ in a smooth way, if we are away from the zero section. This implies one can regard $(x, \xi)$ as a parameterization of $C$ and $\pi_M$ is a diffeomorphism.

For (b), in local parameterization we have $\pi_\mathcal{M}: (y, \eta, t, \mu) \mapsto (y, \eta, \mu a_1, \mu b_1)$.
We compute $\diff \pi_\mathcal{M}$ to have
\[
\det \diff \pi_\mathcal{M} = \mu (\dot{a}_1 b_1 - \dot{b}_1 a_1) \equiv \mu W(t),
\]
where $W(t) $ is the Wronskian and it satisfies $\dot{W}(t) = V(\lambda) W(t)$.
Indeed,
recall $a_1, b_1$ are the projections of Jacobi fields onto $\dot{\gamma}^\perp(t)$
and therefore both of they should satisfy the equation in (\ref{Jeq_y}).
Solving the equation of the Wronskian, we have $W(t) = W(0) e^{\int V(\lambda) \diff t}$,
which indicates $W(t)$ is zero at any $t$, if it vanishes at some point.
When $W(t)$ vanishes, we have $a_1(t) = c b_1(t)$ for some constant $c$.
However, since
$a_1(0) \neq 0$ and
$b_1(0) = 0$ by the initial condition (\ref{eq_initial}),
it is impossible
and therefore we have $W(t) \neq 0$.
\end{proof}
We say the integral transform $I_w$ satisfies the Bolker condition, see \cite{Guillemin1979}, if the projection $\pi_\mathcal{M}$ is an injective immersion. 
\begin{thm}\label{thm_C_diff}
    The map $\mathcal{C}(x,\xi) \equiv \pi_\mathcal{M} \circ \pi_M^{-1}(x,\xi)$ 
    is a local diffeomorphism from $T^*M\setminus 0$ to $T^*\mathcal{M}\setminus 0$.
    Moreover, we have $\mathcal{C}(p_1, \xi^1) = \mathcal{C}(p_2, \xi^2)$ if and only if there is a curve $\gamma(t,y,\eta)$ joining $p_1$ at $t_1$ and $p_2$ at $t_2$, with $t_2 > t_1$, such that
    \begin{itemize}
        \item[(a)] $p_2$ is conjugate to $p_1$.
        \item[(b)] $\xi^1 = \mu a_1(t_2) \dot{\gamma}^\perp (t_1)$ and $\xi^2 = \mu a_1(t_1) \dot{\gamma}^\perp (t_2)$ for some nonzero $\mu$, where $a_1(t)$ is defined in (\ref{jeq_ab}).
    \end{itemize}
In particular, if there are no conjugate points, then the Bolker condition is satisfied and $\mathcal{C}$ is a diffeomorphism.
\end{thm}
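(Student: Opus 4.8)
The plan is to reduce everything to the explicit local form of $C$ together with Proposition \ref{pp_C}. The local diffeomorphism claim is immediate: $\pi_M$ is a diffeomorphism and $\pi_\mathcal{M}$ a local diffeomorphism, so $\mathcal{C} = \pi_\mathcal{M}\circ \pi_M^{-1}$ is a local diffeomorphism with nothing further to check. For the characterization I would first translate the identity $\mathcal{C}(p_1,\xi^1)=\mathcal{C}(p_2,\xi^2)$ into coordinates. Writing the two $\pi_M$-preimages in the parameterization $(y,\eta,t,\mu)$ of $C$, equality of the $\mathcal{M}$-components forces the same $(y,\eta)$, so $p_1,p_2$ lie on a single curve $\gamma(y,\eta,\cdot)$ with $p_j=\gamma(t_j)$ and $\xi^j=\mu_j\dot{\gamma}^\perp(t_j)$, and what remains are the two scalar equations $\mu_1 a_1(t_1)=\mu_2 a_1(t_2)$ and $\mu_1 b_1(t_1)=\mu_2 b_1(t_2)$.

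The core is to identify these two equations with conjugacy. First I would observe that $(a_1(t),b_1(t))$ never vanishes: both $a_1,b_1$ solve the Jacobi equation (\ref{Jeq_y}) and their Wronskian $W=\dot{a}_1 b_1-\dot{b}_1 a_1$ is nowhere zero by the proof of Proposition \ref{pp_C}(b), so a common zero of $a_1,b_1$ would force $W=0$. Hence the two scalar equations assert that the nonzero vectors $(a_1(t_1),b_1(t_1))$ and $(a_1(t_2),b_1(t_2))$ are proportional, equivalently $a_1(t_1)b_1(t_2)-a_1(t_2)b_1(t_1)=0$. To tie this to conjugacy I use Propositions \ref{jacobi_eq} and \ref{jocabifield_simplied}: a Jacobi field has normal component $\kappa_1 a_1+\kappa_2 b_1$ and tangential component $\kappa_1 a_2+\kappa_2 b_2+c_1+c_2 t$. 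A nonzero Jacobi field vanishing at $t_1$ and $t_2$ exists precisely when some $(\kappa_1,\kappa_2)\neq 0$ annihilates the normal component at both times, i.e.\ exactly when the above determinant vanishes; then $c_1,c_2$ are uniquely solvable (since $t_1\neq t_2$) to annihilate the tangential component, and the field is genuinely nonzero because $a_1,b_1$ are independent. By the Jacobi-field criterion for conjugacy this is statement (a).

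To obtain (b), given conjugacy I would solve the scalar equations for the common scale by setting $\mu=\mu_1/a_1(t_2)=\mu_2/a_1(t_1)$, which is consistent precisely by the first scalar equation, yielding $\xi^1=\mu a_1(t_2)\dot{\gamma}^\perp(t_1)$ and $\xi^2=\mu a_1(t_1)\dot{\gamma}^\perp(t_2)$ as required; conversely, substituting the form (b) back into the two scalar equations reproduces exactly the determinant condition, closing the equivalence. The one place demanding care is the bookkeeping when some $a_1(t_j)$ vanishes, where one renormalizes using the nonvanishing of $(a_1,b_1)$.

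For the last assertion I would establish the Bolker condition by proving $\pi_\mathcal{M}$ is injective, it being already an immersion by Proposition \ref{pp_C}(b). If two points of $C$ share an $\mathcal{M}$-image, they have the same $(y,\eta)$, hence lie on one curve at times $t_1,t_2$; the two scalar equations with $t_1\neq t_2$ would force conjugacy, which is excluded, so $t_1=t_2$, and then $\mu_1=\mu_2$ because $(a_1,b_1)\neq 0$. Thus $\pi_\mathcal{M}$ is an injective immersion and $\mathcal{C}=\pi_\mathcal{M}\circ\pi_M^{-1}$, being an injective local diffeomorphism, is a diffeomorphism onto its image. The main obstacle throughout is the second step: correctly matching the two-parameter freedom $(\kappa_1,\kappa_2)$ to the normal component and the extra $(c_1,c_2)$ freedom coming from the $t\dot{\gamma}$ extension to the tangential component, and verifying the resulting Jacobi field is nonzero.
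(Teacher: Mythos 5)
Your proposal is correct and is essentially the paper's own argument: both reduce to the coordinate description of $C$, translate $\mathcal{C}(p_1,\xi^1)=\mathcal{C}(p_2,\xi^2)$ into the two scalar equations $\mu_1 a_1(t_1)=\mu_2 a_1(t_2)$ and $\mu_1 b_1(t_1)=\mu_2 b_1(t_2)$, and convert the vanishing of $a_1(t_1)b_1(t_2)-a_1(t_2)b_1(t_1)$ into a nonzero Jacobi field vanishing at $t_1,t_2$ via Proposition \ref{jocabifield_simplied} (your kernel vector $(\kappa_1,\kappa_2)$ is precisely the paper's explicit choice $(-\hat{\eta},\hat{y})$, and the affine tangential correction in $c_1,c_2$ is the same), with identical $\mu$-bookkeeping for part (b). Your additions --- the Wronskian argument that $(a_1,b_1)$ never vanishes simultaneously, the explicit injectivity of $\pi_\mathcal{M}$ underlying the Bolker claim (which the paper leaves implicit), and flagging the degenerate case $a_1(t_1)=a_1(t_2)=0$ (which the paper's proof also glosses over) --- are refinements of the same route rather than a different one.
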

\begin{proof}
If there exist different $(p_1,\xi^1)$ at $t_1$ and  $(p_2,\xi^2)$ at $t_2$
such that $\mathcal{C}(p_1, \xi^1) = \mathcal{C}(p_2, \xi^2) = (y,\eta, \hat{y}, \hat{\eta})$,
we must have $\hat{y} b_1(t) - \hat{\eta}a_1(t) =0$ is true for both $ t= t_1, t_2$.
Consider the vector field
\[
c_0(y,\eta,t)
= \hat{y} \frac{\partial \gamma}{\partial \eta} - \hat{\eta} \frac{\partial \gamma}{\partial \eta}
= (\hat{y} b_1(t) - \hat{\eta}a_1(t)) e_1(t) + (\hat{y} b_2(t) - \hat{\eta}a_2(t))e_2(t).
\]
It satisfies
\begin{align*}
&c_0(y,\eta,t_1) = (\hat{y} b_2(t_1) - \hat{\eta}a_2(t_1))e_2(t_1) \equiv \mu_1 \dot{\gamma}(t_1).\\
&c_0(y,\eta,t_2) = (\hat{y} b_2(t_2) - \hat{\eta}a_2(t_2))e_2(t_2) \equiv \mu_2 \dot{\gamma}(t_2).
\end{align*}
It follows that we can define the Jacobi field
\begin{equation*}\label{jacobifield}
c(y,\eta,t)
= \hat{y} \frac{\partial \gamma}{\partial \eta} - \hat{\eta} \frac{\partial \gamma}{\partial \eta} - (\mu_1 \frac{t - t_2}{t_1 -t_2} +\mu_2 \frac{t - t_1}{t_2 -t_1} ) \dot{\gamma}(t),
\end{equation*}
which satisfies $c(y,\eta,t_1)=c(y,\eta,t_2)=0$. This implies $p_2$ is conjugate to $p_1$.
Suppose
\[
\xi^1 = \mu_1 \dot{\gamma}^\perp (t_1), \quad \xi^2 = \mu_2 \dot{\gamma}^\perp (t_2)
\]
By (\ref{conditioncanonical}), they satisfy
\begin{equation*}
\hat{y} = \mu_1 a_1(t_1) = \mu_2 a_1(t_2).
\end{equation*}
This indicates we can find nonzero $\mu$ such that $\mu_1 = \mu a_1(t_2)$ and $\mu_2 = \mu a_1(t_1)$.

Conversely, if $p_2$ is conjugate to $p_1$, then there is a nonzero Jacobi field $J(t)$
so that $J(t_1)=J(t_2)=0$.
More precisely, we have
\[
\kappa_1 \frac{\partial \gamma}{\partial y} + \kappa_2 \frac{\partial \gamma}{\partial \eta} + c_1 \dot{\gamma}(t) +c_2 t\dot{\gamma}(t) = 0,
\quad  \text{for } t = t_1, t_2.
\]
The projection on $\dot{\gamma}^\perp(t)$ shows
\begin{equation}\label{singularmatrix}
\begin{cases}
\kappa_1 a_1(t_1) + \kappa_2 b_1(t_1) =0\\
\kappa_1 a_1(t_2) + \kappa_2 b_1(t_2) =0
\end{cases}
\iff
\left[
\begin{array}{cc}
a_1(t_1) & b_1(t_1)\\
a_1(t_2) & b_1(t_2)
\end{array}
\right]
\left[
\begin{array}{cc}
\kappa_1\\
\kappa_2
\end{array}
\right]
= 0.
\end{equation}
Notice $\kappa_1, \kappa_2$ cannot be both zero, otherwise we have the trivial Jocabi field tangent to $\gamma$.
This implies the matrix in (\ref{singularmatrix}) is singular.
Now since $\xi^1 = \mu a_1(t_2) \dot{\gamma}^\perp (t_1)$ and $\xi^2 = \mu a_1(t_1) \dot{\gamma}^\perp (t_2)$ for some nonzero $\mu$,
by (\ref{conditioncanonical}) one have
\[
\hat{y} = \mu a_1(t_2) a_1(t_1) = \mu a_1(t_1)a_1(t_2), \quad
\hat{\eta} = \mu a_1(t_2) b_1(t_1) = \mu a_1(t_1)b_1(t_2),
\]
where the last equality is from the zero determinant of the matrix in (\ref{singularmatrix}).
This proves $\mathcal{C}(p_1, \xi^1) = \mathcal{C}(p_2, \xi^2)$.
\end{proof}
For $(p_1, \xi^1)$ and $(p_2, \xi^2)$ satisfying Theorem \ref{thm_C_diff}, we call them the \textit{conjugate covectors}.
\begin{thm}[{\cite[Proposition 2]{Frigyik2007}}]\label{thm_N}
If there are no conjugate points, the normal operator $N = I_w^* I_w$ is a $\Psi$DO of order $-1$ with principal symbol 
\[
\sigma_p(N)(x, \xi) = \frac{2\pi}{|\xi|}(|w(x, \xi_\perp)|^2 +|w(x, -\xi_\perp)|^2 ),
\]
where $\xi_\perp$ is the vector conormal to the covector $\xi$ with the same length and the rotation of $\pi/2$. 
\end{thm}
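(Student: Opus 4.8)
The plan is to compute the normal operator $N = I_w^* I_w$ directly as a composition of Fourier integral operators and identify it as a pseudodifferential operator, then extract the principal symbol. The crucial input is Proposition \ref{pp_C}: in the absence of conjugate points, Theorem \ref{thm_C_diff} gives that $\mathcal{C} = \pi_\mathcal{M}\circ\pi_M^{-1}$ is a diffeomorphism, so the Bolker condition holds and $\pi_\mathcal{M}$ is an injective immersion. Under this condition the standard clean-composition calculus (Guillemin--Sternberg, as in \cite{Guillemin1979}) guarantees that $N = I_w^* I_w$ is a $\Psi$DO; since $I_w$ has order $-1/2$, the composition $I_w^* I_w$ has order $-1/2 - 1/2 = -1$. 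So the bulk of the work is not the operator class but the explicit symbol.

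\smallskip
First I would write $N$ as an oscillatory integral. Using the Schwartz kernel $w\delta_Z$ of $I_w$ and its adjoint, the kernel of $N$ is an integral over the space of curves $\gamma$ passing through both $x$ and $x'$, weighted by $w(\gamma(t),\dot\gamma(t))\overline{w(\gamma(t'),\dot\gamma(t'))}$. Passing to the phase-space description via $\pi_M$ being a diffeomorphism, I would parameterize the wavefront contributions by covectors $(x,\xi)\in T^*M\setminus 0$. The key geometric fact is that for each $(x,\xi)$ there are exactly two curves through $x$ conormal to $\xi$ — corresponding to the two orientations $\dot\gamma^\perp$ aligned with $\pm\xi$, i.e. the directions $\xi_\perp$ and $-\xi_\perp$. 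This is why the symbol is a \emph{sum} of two terms $|w(x,\xi_\perp)|^2 + |w(x,-\xi_\perp)|^2$: each orientation of the conormal curve contributes one summand.

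\smallskip
Next I would reduce the symbol computation to a one-dimensional stationary-phase or change-of-variables calculation along each such curve. Writing the kernel of $N$ near the diagonal and changing variables from the curve-parameterization $(y,\eta,t,t')$ to $(x,\xi)$ using the Jacobian factors from Proposition \ref{pp_C} (where the nonvanishing Wronskian $W(t)\neq 0$ enters), the amplitude collapses to $|w|^2$ evaluated on the conormal direction, and the $|\xi|^{-1}$ homogeneity together with the $2\pi$ normalization emerges from the Fourier-inversion normalization in the FIO calculus. I would confirm the order and the factor $2\pi/|\xi|$ by comparing against the model case of the ordinary X-ray/geodesic transform, where this formula is classical.

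\smallskip
The main obstacle, and the step I would be most careful about, is the bookkeeping of the two contributions and the precise constant. The clean-composition machinery produces a $\Psi$DO but tracking the exact scalar factor ($2\pi$ versus some other power of $2\pi$) and verifying that the two conormal orientations do not interfere (no cross terms survive because the two curves are microlocally disjoint in $T^*\mathcal{M}$ away from conjugate points) requires the no-conjugate-points hypothesis in an essential way: it is exactly what ensures $\pi_\mathcal{M}$ is injective, so that the two branches contribute additively rather than producing an FIO remainder. Since this is precisely \cite[Proposition 2]{Frigyik2007}, I would ultimately invoke that result, noting that my $\lambda$-geodesic setting reduces to theirs once the reparameterization and the Bolker condition established in Theorem \ref{thm_C_diff} are in place.
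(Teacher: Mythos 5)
Your proposal is correct in substance and ends exactly where the paper's proof ends: both reduce the theorem to \cite[Proposition 2]{Frigyik2007}. The difference is in what gets proved versus what gets cited. Your effort goes into the FIO scaffolding --- the Bolker condition from Theorem \ref{thm_C_diff}, clean composition giving a $\Psi$DO of order $-1$, the two curves through $x$ conormal to $\xi$ (directions $\pm\xi_\perp$) contributing additively with no cross terms --- all of which is correct, but none of which is where the remaining work lies; you then defer the exact symbol to a ``model case comparison'' and to the citation. The paper instead quotes from the proof of \cite[Proposition 2]{Frigyik2007} the formula
\[
\sigma_p(N)(x,\xi)=2\pi\int_{S^1}J^{-1}(x,0,\omega)\,|w(x,\omega)|^2\,\delta(\xi(\omega))\diff\omega,
\]
where $J$ is the Jacobian of the change of variables $(t,v)\mapsto(r,\omega)$ built from writing $\exp_x(t,v)-x=t\,m(t,v;x)$, $r=t|m|$, $\omega=m/|m|$, and then performs the single computation that constitutes the proof: because the curves are parameterized by arc length, $m(0,v;x)=\dot{\gamma}_{x,v}(0)=v$, hence $J(x,0,v)=1$, and evaluating the delta integral at its two zeros $\omega=\pm\xi_\perp/|\xi|$ produces the factor $1/|\xi|$ and the two terms of the stated symbol. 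This normalization is the step your last sentence gestures at (``once the reparameterization \dots\ [is] in place'') but never verifies, and it is not automatic: passing to arc length changes both the generator and the weight, so without the check $J(x,0,v)=1$ the cited formula carries an undetermined Jacobian factor and does not literally read as the statement. Adding that one verification would make your proposal complete; everything else in it is sound.
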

\begin{proof}
This theorem is proved in {\cite[Section 4]{Frigyik2007}}. By the proof of {\cite[Proposition 2]{Frigyik2007}}, the normal operator is a $\Psi$DO with the principal symbol
\[
\sigma_p(N)(x,\xi) = 2\pi\int_{S^1} A(x,0,\omega)\delta(\xi(\omega)) \diff \omega,
\]
where $\delta$ is the Dirac delta function and in local coordinates we have
\[
A(x, 0, \omega) = J^{-1}(x, 0, \omega)|w(x, \omega)|^2.
\]
Here we write 
\[
\exp_x(t,v) - x = t m(t,v;x)
\]
and introduce the change of variables $(r, w) \in \mathbb{R} \times S^1$ by 
\[
r = t|m(t,v;x)|, \quad \omega = m(t,v;x)/|m(t,v;x)|,
\]
with the Jacobian 
\[
J(x, t, v) \equiv \det \frac{\partial (r, \omega)}{\partial (t, v)}. 
\]
For more details, see equations (32) - (34) in \cite{Frigyik2007}.
Note that
\[
m(0,v;x) = \dot{\gamma}_{x,v}(0) = v
\] since we use arc length parameterization. 
Thus, we have that $J(x, 0, v) = 1$ and the principal symbol of $N$ is given as claimed. 
See also \cite[Theorem 5.2]{MR2970707} and its proof. 
\end{proof}

\subsection{The local problem}
In this following, we present the cancellation of singularities arising in the local inverse problem for the integral transform $I_w$,  if there are conjugate covectors.
This is the analog to the case of geodesic ray transforms in \cite{Monard2015}.

Consider a fixed curve $\gamma \in \Gamma$ with conjugate covectors $(x_1, \xi^1)$ at $t_1$ and $(x_2, \xi^2)$ at $t_2$.
Let $V^j$ be small conic neighborhoods of $(x_j, \xi^j)$, with base $U_j$ as a small neighborhood of $x_j$, for $j=1,2$.
With $\mathcal{C}$ being a local diffeomorphism, it maps a small conic neighborhood of $(x_j, \xi^j)$ to one of $\mathcal{C}(x_j, \xi^j)$, for $j = 1, 2$.
Notice $V^j$ should have two disjoint components, for more details see \cite{Monard2015}.
By shrinking those neighborhoods a bit, one can assume that $\mathcal{C}(V^1) = \mathcal{C}(V^1) \equiv \mathcal{V}$.
We define the restriction $\mathcal{C}_j \equiv \mathcal{C}|_{V^j}$ for $j=1,2$.
Note that $\mathcal{C}_j$ are diffeomorphisms.
It follows that
\[
\mathcal{C}_{12} \equiv \mathcal{C}_1^{-1}  \mathcal{C}_2: V^2 \rightarrow V^1, \quad \quad \mathcal{C}_{21} \equiv \mathcal{C}_2^{-1}  \mathcal{C}_1: V^1 \rightarrow V^2
\]
are also diffeomorphisms.
Let $I_j$ be $I_w$ restricted to distributions with wave front sets supported in $V^j$, for $j =1,2$.
Then $I_j$ are FIOs with canonical relations ${C}_j$, where $C_j$ are restriction of the canonical relation $C$ to $V^j \times \mathcal{V}$. 
When the weights are nonvanishing, the restriction $I_j$ are elliptic FIOs, and therefore we can define
\begin{align}\label{def_F12}
F_{21} = I_2^{-1}I_1, \quad F_{12} = I_1^{-1}I_2.
\end{align}
Note that they are FIOs with canonical relations 
$C_2^{-1}\circ C_1$ and $C_1^{-1}\circ C_2$ respectively.
We can show the following result by the same arguments in \cite{Monard2015}.
\begin{thm}\label{thm_microkernel}
Suppose $w(x_1, \xi^1_\perp), w(x_2, \xi^2_\perp) \neq 0$.
Let $f_j \in \mathcal{E}'(U_j)$ with $\text{\( \WF \)}(f_j) \subset V^j$, for $j = 1,2$.
Then the local data
    \[
    {{I_w}} (f_1+f_2) \in H^s(\mathcal{V})
    \]
    if and only if
    \[
    f_1 + F_{12} f_2 \in H^{s-1/2}(V^1) \Leftrightarrow
    F_{21} f_1 +  f_2 \in H^{s-1/2}(V^2),
    \]
    where $F_{12}$ and $F_{21}$ are elliptic FIOs defined in (\ref{def_F12}). 
\end{thm}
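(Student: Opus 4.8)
The plan is to reduce the whole statement to elliptic regularity for a single elliptic FIO associated with a canonical graph, namely $I_1$ (and symmetrically $I_2$), exactly as in \cite{Monard2015}. First I would record the microlocal splitting of the data: since $\WF(f_j)\subset V^j$ and $I_j$ is by definition $I_w$ cut down to wavefront sets in $V^j$, one has
\[
I_w(f_1+f_2)\equiv I_1 f_1 + I_2 f_2 \pmod{C^\infty},
\]
with both terms having wavefront set in $\mathcal{V}=\mathcal{C}(V^1)=\mathcal{C}(V^2)$.

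Next I would establish that $I_1,I_2$ are elliptic FIOs of order $-1/2$ associated with canonical graphs. By Proposition \ref{pp_C}, over $V^j$ the canonical relation $C_j$ projects diffeomorphically under both $\pi_M$ and $\pi_\mathcal{M}$, so each $C_j$ is a canonical graph; combined with the hypothesis $w(x_j,\xi^j_\perp)\neq 0$, which makes the principal symbol of $I_w$ nonvanishing at $(x_j,\xi^j)$ and hence on all of $V^j$ after shrinking, this gives ellipticity. Each $I_j$ then admits a microlocal parametrix $I_j^{-1}$ of order $+1/2$, and the compositions in (\ref{def_F12}) are clean, so $F_{12}=I_1^{-1}I_2$ and $F_{21}=I_2^{-1}I_1$ are elliptic FIOs of order $0$ with canonical relations $C_1^{-1}\circ C_2$ and $C_2^{-1}\circ C_1$.

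I would then exploit the algebraic identity
\[
I_1\big(f_1+F_{12}f_2\big)=I_1 f_1 + I_1 I_1^{-1} I_2 f_2 \equiv I_w(f_1+f_2)\pmod{C^\infty},
\]
and invoke elliptic regularity for $I_1$: being an elliptic FIO of order $-1/2$ on a canonical graph, one has $u\in H^{s-1/2}(V^1)$ microlocally if and only if $I_1 u\in H^s(\mathcal{V})$, the smoothing errors lying in $H^\infty$ and so being harmless to Sobolev membership. Taking $u=f_1+F_{12}f_2$ yields the first equivalence. The two right-hand conditions are then interchanged by the order-zero elliptic FIO $F_{21}$: since $F_{21}F_{12}\equiv\mathrm{Id}$ modulo smoothing one gets $F_{21}(f_1+F_{12}f_2)\equiv F_{21}f_1+f_2$ modulo smoothing, and $F_{21}$, being elliptic of order $0$ on a canonical graph, preserves $H^{s-1/2}$ in both directions.

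I expect the main obstacle to be the microlocal bookkeeping rather than any single estimate. One must be careful that the parametrices $I_j^{-1}$ exist with exactly the right order---this is precisely where the canonical-graph property from Proposition \ref{pp_C} and the nonvanishing weight hypothesis $w(x_j,\xi^j_\perp)\neq 0$ enter---and that all the FIO compositions are transversal/clean, so that the symbolic identities above genuinely hold modulo smoothing operators and the order counting producing the $H^{s-1/2}$ shift is correct.
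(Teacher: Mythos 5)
Your proposal is correct and is essentially the paper's own proof: the paper gives no argument beyond the remark that the result follows ``by the same arguments in \cite{Monard2015}'', and your reduction $I_w(f_1+f_2)\equiv I_1(f_1+F_{12}f_2)$ modulo smoothing, followed by elliptic regularity for the elliptic FIO $I_1$ of order $-1/2$ associated with a canonical graph (via Proposition \ref{pp_C} and the nonvanishing weight), with the two microlocal conditions interchanged by the order-zero elliptic FIO $F_{21}$, is exactly the argument of that reference. Your order bookkeeping is also the consistent one: the $-1/2$ order of $I_j$ matches the theorem's $H^s\leftrightarrow H^{s-1/2}$ shift (the paper's later statement in the artifacts subsection that $I_1,I_2$ have order $-1$ is a slip).
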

This theorem indicates that
given a distribution $f_1$ singular in $ V^1$, there exists a distribution $f_2 = -F_{21}f_1$ singular in $V^2$ such that the transform is smooth.
In other words, if we suppose $f=f_1$, the singularities of $f$ cannot be resolved from the singularities of the transform $I_w f$.
Indeed, the singularities of $f$ can only be recovered up to an error in the microlocal kernel, i.e., an error in form of $(\text{Id} - F_{21})h_1$ with some $h_1$ singular in $V^1$, 
since $I_w ((\text{Id} - F_{21})h_1)$ is always smooth.
For a more detailed description, see \cite{Holman2017}.

\subsection{Artifacts}\label{subsec_artifacts}
In this subsection, we describe the artifacts arising in the reconstruction from the local data, when there are conjugate points.
For convenience, we assume the weight $w = 1$ and use the notation $I$ instead of $I_w$ in the following.

First, we consider the backprojection ${{I}}^*{{I}}$ to reconstruct $f$, in the presence of conjugate covectors. 
Suppose $\gamma$ is the $\lambda$-geodesic in Theorem \ref{thm_C_diff} with conjugate covectors $(x, \xi)$ in $V^1$ and $(y, \eta)$ in $V^2$.
Let $f = f_1 + f_2$ with $f_j$ singular in $V^j$, $j = 1,2 $. 
In a small neighborhood of $\gamma$, we have
\begin{equation*}
{{I}}^*{{I}}f = I_1^*I_1 f_1+I_1^*I_2 f_2+I_2^*I_1 f_1+I_2^*I_2 f_2.
\end{equation*}
Recall $I_1$ and $I_2$ are defined microlocally and are elliptic FIOs of order $-1$ with canonical relations ${C}_1$ and ${C}_2$, which are diffeomorphisms.
Then by Theorem \ref{thm_N}, 
$I_1^*I_1$ and $I_2^*I_2$ are elliptic $\Psi$DOs of order $-1$ with principal symbol
 ${4\pi}/{|\xi|}$.  
For $I_1^* I_2$ and $I_2^*I_1$, 
by \cite[Theorem 25.2.2]{Hoermander2009} and the transversal composition calculus,
they are FIOs of order $-1$ associated with canonical relations $C^{-1}_1 \circ C_2 $ and $C^{-1}_2 \circ C_1$ respectively.

Let  $-\Delta_g$ be the Laplacian operator in $M$ and its square root $\sqrt{-\Delta_g}$ is a $\Psi$DO of order $1$ with principal symbol $|\xi|_g$. 
Let $\Lambda = \sqrt{-\Delta_g}/{(4\pi)}$. 
Then module lower order operators, one has
\[
\Lambda I_1^*I_1 \equiv \text{Id}, \quad \Lambda I_1^*I_1 \equiv \text{Id}, 
\quad \Lambda I_2^*I_1 \equiv F_{21}, \quad \Lambda I_1^*I_2 \equiv F_{12}.
\]
It follows that 
\begin{align}\label{eq_fb}
\Lambda I^*If = f_1 + F_{21} f_1 + f_2 + F_{12} f_2
\end{align}
up to lower order terms.
This implies that we recover the singularities of $f_1 + f_2$ together with $ F_{12} f_1 + F_{21} f_2$ from the backprojection.
The later are artifacts. 
If we write a distribution singular in $V^1 \cup V^2$ as a vector-valued function with the first component equal to its restriction to $V^1$ and the second component equal to its restriction to $V^2$, then 
\begin{equation*}
\Lambda{{I}}^*{{I}}f = 
\begin{bmatrix}
\begin{array}{ll}
\text{Id} &  F_{12}\\	
F_{21}&  \text{Id}
\end{array}
\end{bmatrix}
\begin{bmatrix}
\begin{array}{l}
f_1\\	
f_2
\end{array}
\end{bmatrix}
\equiv M 
\begin{bmatrix}
\begin{array}{l}
f_1\\	
f_2
\end{array}
\end{bmatrix}
\end{equation*}
up to lower-order terms. 
Especially when $f_2 = 0$, from the filtered backprojection $\Lambda{{I}}^*{{I}}f$, we recover $f_1 + F_{21}f_1$ and therefore the artifacts equal to $F_{21}f_1$ arises in the reconstruction. 

Next, we consider the numerical reconstruction by using the Landweber iteration as in \cite{Holman2017}. 
For more details of the method, see \cite{STEFANOV2017}.
We follow the same argument in \cite{Monard2015,Holman2017},
see also \cite{Zhang2020}.
Let $\chi$ be a smooth cutoff in $\mathbb{R}^2$ with $\chi = 1$ in a small neighborhood of $M$ to avoid dealing with non-local operators. 
We set $\mathcal{L} = \Lambda \chi I^*I$ and compute 
\[
\mathcal{L}^* \mathcal{L} =I^*I \chi \frac{1}{4\pi}({-\Delta_g})\chi I^*I
= 
\begin{bmatrix}
\begin{array}{ll}
\text{Id} + F^*_{21}F_{21}&  F^*_{21} + F_{12}\\	
F_{12}^* + F_{21}&  \text{Id} + F_{12}^*F_{12}
\end{array}
\end{bmatrix}.
\]

Let $g$ be the local data and it is assumed be in the range of ${{I}}$.
Now we use the Landweber iteration to solve the equation ${{I}} f = g$. 
We write
\begin{equation}\label{landweber}
(\text{Id} - (\text{Id}- \gamma \mathcal{L}^* \mathcal{L})) f = \gamma \mathcal{L}^*\Lambda\chi I^* g.
\end{equation}
Then with a small enough and suitable $\gamma > 0$, it can be solved by the Neumann series and we have the truncated scheme
\[
f^{(N)} = \sum_{k=0}^{N}(\text{Id}- \gamma \mathcal{L}^*\mathcal{L})^k 
\gamma \mathcal{L}^*\Lambda\chi I^* g.
\]
This series converge to the minimal norm solution to $\mathcal{L} f = \Lambda\chi I^* g$.
Suppose the original function is $f = f_1 + f_2$ with $f_2 = 0$. 
The analysis in \cite[Section 3.2.3]{Holman2017} shows that 
\begin{align*}
\text{Landweber solution} = 
[f_1 - (\text{Id} + F_{21}^*F_{21})^{-1}f_1] + [F_{21}(\text{Id} + F^*_{21}F_{21})^{-1}f_1],
\end{align*}
where the first square brackets are terms microlocally supported in $V^1$ and the second term in $V^2$. 
The artifacts arising in the reconstruction is 
\begin{align*}
f - \text{Landweber solution} = 
[(\text{Id} + F_{21}^*F_{21})^{-1}f_1] - [F_{21}(\text{Id} + F^*_{21}F_{21})^{-1}f_1].
\end{align*}

We emphasize that the artifacts above arises in the reconstruction from the local data.
If we consider the recovery from the global data, i.e., with the knowledge of the integral transform over all curves in $\Gamma$, 
then the singularities of $f$ might be recoverable.
This is because the singularities can be probed by more than one smooth curves in $\Gamma$. 
In some cases, the recovery of certain singularity depends on a discrete dynamical system, i.e., a sequence of conjugate covectors, inside $M$, see (\ref{eq_M}). 
If this sequence goes out of $M$, then we can resolve the corresponding singularity as is discussed in \cite{MR3080199, Zhang2020}.
For more details, see Proposition \ref{pp_m}.

\section{Examples of $\lambda$-geodesics with conjugate points}\label{sec_example}
In this section, we present several examples of the family of curves that we study.
These curves are different from geodesics and we show the conjugate points exist.
\begin{example}\label{ex_c}
The first example comes from \cite{MR2970707}.
Let $\Gamma_1$ consist all the unit circles in $\mathbb{R}^2$ with a fixed orientation.
These circles are actually the magnetic geodesics w.r.t the Euclidean metric and a constant non-zero magnetic field by \cite{MR2351370}.
Suppose they have a fixed orientation and we parameterize a unit circle through point $p =(p^1, p^2)$ in the direction of $v = (\cos \theta, \sin \theta)$ by
\[
\gamma_{p, \theta}(t) = (p^1, p^2) + (\cos(t + \theta), \sin (t+ \theta)) - (\cos \theta, \sin \theta).
\]
We have
\[
\dot{\gamma}_{p, \theta}(t) =  (-\sin(t + \theta), \cos (t+ \theta)), \quad  \ddot{\gamma}_{p, \theta}(t) = - (\cos(t + \theta), \sin (t+ \theta)),
\]
and therefore
\[
\ddot{\gamma}_{p, \theta}(t) =
\begin{pmatrix}
    0 & -1 \\
    1 & 0
\end{pmatrix}
\dot{\gamma}_{p, \theta}(t).
\]
One can check that the properties (A1) and (A2) are satisfied.
As is shown in \cite{MR2970707}, since
\begin{align}\label{eq_dexp}
\det \diff_{s, v} \exp_p(t,v) =\det \frac{\partial \gamma_{p, \theta}(t)}{\partial (t, \theta)} = \sin t,
\end{align}
for each point $p$, it has conjugate points corresponding to $t = \pi$ and any $\theta \in \mathbb{R}$.
Thus, the conjugate locus of $p$ is 
\[
\Sigma(p) = \{\gamma_{p, \theta}(\pi), \ \theta \in \mathbb{R}\} = \{y:\ |y-p| = 2\},
\]
which is the circle centered at $p$ with radius equal to $2$.

Now let $(p_1, \xi^1) \in T^*M$ 
and suppose $f$ is singular near $(p_1, \xi^1)$. 
We would like to find out the conjugate covector $(p_2, \xi^2)$ of $(p_1, \xi^1)$. 
To parameterize $\Gamma_1$, we assume $p_1$ is near the origin and $\xi^1$ is in a conic neighborhood of $\xi^0 = (1,0)$.
Consider the line $H = \{(y, 0), y \in \mathbb{R}\}$.
By the notations used in Section \ref{sec_microlocal}, a unit circle passing the point $(y, 0) \in \mathbb{R}^2$ in the direction of $v = (\cos \eta, \sin\eta)$ is now parameterized by 
\[
\gamma(t, y,\eta) = (y,0) + (\cos(t + \eta), \sin (t+ \eta)) - (\cos \eta, \sin \eta). 
\]

Suppose $p_1 = (y_1, 0)$ and 
$
\xi^1  = (\cos \eta_1, \sin \eta_1).
$
By Theorem \ref{thm_C_diff}, $(p_2, \xi^2)$ and $(p_1, \xi^1)$ are conjugate covectors when
$p_2$ is conjugate to $p_1$ and 
\[
\xi^1 = \mu a_1(t_2) \dot{\gamma}^\perp (t_1), \quad \xi^2 = \mu a_1(t_1) \dot{\gamma}^\perp (t_2)
\] for some nonzero $\mu$. 
Then by equation (\ref{eq_dexp}), we write  
\[
p_2 = \gamma(\pi, y_1, \eta_1), \quad \text{or } p_2 = \gamma(\pi, y_1, -\eta_1)
\]
with $t_1 = 0, t_2 = \pi$, which depends on the different sign of $\mu$. 
We denote the first case by $(p_2^{(1)}, \xi^2_{(1)})$ and the second case by $(p_2^{(2)}, \xi^2_{(2)})$. 
We compute 
\[
p^{(1)}_2 = (y_1, 0) - 2(\cos \eta_1, \sin \eta_1), \quad p^{(2)}_2 = (y_1, 0) + 2(\cos \eta_1, \sin \eta_1). 
\]
Since $a_1(t) = (\dot{\gamma}^\perp(t), \frac{\partial \gamma(t,y,\eta)}{\partial y})$, 
we can find
\[
\xi^2_{(1)} = \xi^1, \quad \xi^2_{(2)} = \xi^1. 
\]
Therefore, there are two covectors that are conjugate to $(p_1, \xi^1)$ given by 
\begin{equation}\label{eq_cc}
(p_2^{(1)}, \xi^2_{(1)}) = (p_1 - 2(\cos \eta_1, \sin \eta_1), \xi^1), \quad (p_2^{(2)}, \xi^2_{(2)}) = (p_1 + 2(\cos \eta_1, \sin \eta_1), \xi^1). 
\end{equation}

\begin{example}
In this example, let $\Gamma_2$ consist all identical ellipses with a fixed orientation starting from different points
\[
\gamma_{p}(t) = (p^1, p^2) + (a\cos t -a , b\sin t),
\]
where $a, b$ are constants. Different values of $a, b$ give us different $\Gamma_2$.
We have
\[
\dot{\gamma}_{p}(t) =  (-a\sin t, b\cos t), \quad \ddot{\gamma}_{p}(t) =  -(a\cos t, b\sin t).
\]
One can verify that $\Gamma$ satisfies the properties (A1) and (A2).
Indeed, first we have
\[
\ddot{\gamma}_{p}(t) =
\begin{pmatrix}
0 & -{a}/{b}\\
{b}/{a} & 0
\end{pmatrix}
\dot{\gamma}_{p}(t).
\]
Then, for any point $x = (x_1, x_2)$ and $v = (\cos \theta, \sin \theta)$,
there is a curve $\gamma_p$ passing $x$ in the direction of $v$ if we can find some $p_0, t_0$ such that
\[
-a \sin t_0 = \mu \cos \theta, \quad b \cos t_0 = \mu \sin \theta, \quad {\gamma}_{p}(t) =  x,
\]
for some $\mu >0$. This is true since we can solve $t_0$ from
\[
\cos t_0 = \frac{-\mu }{a} \cos \theta,
 \quad  \sin t_0 = \frac{\mu }{b}\sin \theta,
\]
where $ \mu  = \frac{ab}{\sqrt{b^2 \cos^2 \theta + a^2 \sin^2 \theta}}$ and then
$p_0= x - (a\cos t_0 -a, b \sin t_0) $.
One can see that $t_0$ is unique in the sense of modulo $2\pi$ and it depends on $\theta$ in a smooth way.
We denote it by $t_0(\theta)$ and compute
\begin{gather*}
\frac{\diff t_0}{\diff \theta} = \frac{b^2 \cos^2 \theta + a^2 \sin^2 \theta}{ab}. \\
\end{gather*}
To parametrize $\Gamma_2$, we pick $H = {(0, x^2)}$.
Notice for every point $x = (0, x^2) \in H$ and any direction $v = (\cos \theta, \sin \theta)$,
there is a unique $\gamma_p(t)$ passing $x$ in the direction of $v$.
From the analysis above, we reparameterize this ellipse as
\[
\rho_{x^2, \theta}(t) = (a\cos(t+t_0 )- a \cos t_0, b \sin(t + t_0)-b \sin t_0 + x^2).
\]
Thus, for any $x = (0, x^2)$ we have
\[
\det \diff_{s, v} \exp_x(t,v) =\det \frac{\partial \rho_{x^2, \theta}(t)}{\partial (t, \theta)} =  \frac{\diff t_0}{\diff \theta} ab \sin t,
\]
and its conjugate points are corresponding to $t = \pi$ and any $\theta$.
This gives us the conjugate locus $\Sigma(x) = \{\rho_{x^2, \theta}(\pi), \ \theta \in \mathbb{R}\} = \{x - 2(a\cos t_0, b\sin t_0), t_0 \in \mathbb{R}\}$,
which is a larger ellipse centered at $x$.
\end{example}
\begin{example}
In this example, we choose the a family of smooth curves that are locally defined.
For $p = (p^1, p^2)$, define
\[
\gamma_p(t) = e^{at} (\cos(t - \varphi) - \cos (-\varphi), \sin (t - \varphi) - \sin (-\varphi)) + p,
\]
where $a$ is a nonzero constant and $\varphi$ satisfies 
\[
\cos \varphi = \frac{a}{\sqrt{a^2 + 1}}, \quad \sin \varphi =\frac{1}{\sqrt{a^2 + 1}}.
\]
We compute
\[
\dot{\gamma}_p(t) = e^{at} \sqrt{a^2 + 1}(\cos t, \sin t),  \quad \ddot{\gamma}_p(t) = e^{at} \sqrt{a^2 + 1}(a\cos t - \sin t , a\sin t + \cos t),
\]
and one can show that
\[
\ddot{\gamma}_{p}(t) =
\begin{pmatrix}
    a & -1\\
    1 & a
\end{pmatrix}
\dot{\gamma}_{p}(t),
\]
which implies the property (A2).
To satisfy the property (A1) at least locally,
we fix a curve
\[
\gamma_0(t) = e^{at} (\cos(t - \varphi), \sin (t - \varphi)),\text{ for  } t \in (-\frac{\pi}{4}, \frac{7 \pi}{4})
\]
which passes the point $(0,0)$ at $t=0$ in the direction $\theta = 0$.
Now let
\[
\Gamma_3 = \{ \gamma_0(t) + p, \text{ where } t \in (-\frac{\pi}{4}, \frac{7 \pi}{4}) \}.
\]
Then $\Gamma_3$ is an open family of smooth curves.
For each point $x = (x_1, x_2)$ and direction $v = (\cos \theta, \sin \theta)$ with $\theta \neq -\frac{\pi}{4}$,
there is a unique
\[
t_0 = \theta, \quad p_0 = x - e^{a \theta}(\cos(\theta - \varphi), \sin(\theta - \varphi) + (\cos \varphi, -\sin \varphi),
\]
such that $\gamma_{p_0}(t_0)$ through $x$ in the direction of $v$.
We reparameterize $\Gamma_3$ using $H = {(0, x^2)}$ by writing these curves as
\begin{align*}
\rho_{x^2, \theta}(t) &= e^{a(t+\theta)} (\cos(t +\theta - \varphi) , \sin (t +\theta - \varphi) )\\
 &- e^{a\theta}(\cos (\theta-\varphi), \sin (\theta-\varphi))) + (0,x^2),
\text{ for  } t \in (-\frac{\pi}{4}, \frac{7 \pi}{4}).
\end{align*}
Notice that $\rho_{x^2, \theta}(t)$ passes $(0,x^2)$ at $t=0$ in the direction of $\theta$.
We mention that $\Gamma_3$ forms a neighborhood of $\rho_{0, 0}(t)$
and we compute
\[
\det \diff_{s, v} \exp_x(t,v) =\det \frac{\partial \rho_{x^2, \theta}(t)}{\partial (t, \theta)} =  e^{2a + \theta}\sin t.
\]
Thus, for the point $x = (0 ,0)$, there are conjugate points corresponding to $t = \pm \pi$ and $\theta \in (-{\pi}/{4}, {7 \pi}/{4})$.
The conjugate locus is
\begin{align*}
\Sigma(x) &= \{\rho_{0, \theta}(\pm\pi), \ \theta \in \mathbb{R}\} \\
&= \{ - (e^{\pm a\pi} + 1)e^{a\theta}(\cos (\theta - \varphi), \sin (\theta - \varphi)),
\text{ for  } \theta \in (-\frac{\pi}{4}, \frac{7 \pi}{4}) \}.
\end{align*}
In Figure \ref{picref}, we choose $a = -1/4$ (this implies $\varphi = \arccos(1/\sqrt{17})$) and we plot all $\rho_{0, \theta}(t)$ for $\theta \in (-{\pi}/{4}, {\pi}/{4})$. The light green curve is part of the conjugate locus $\Sigma(x)$.
\begin{figure}[h]
    \includegraphics[height=0.3\textwidth]{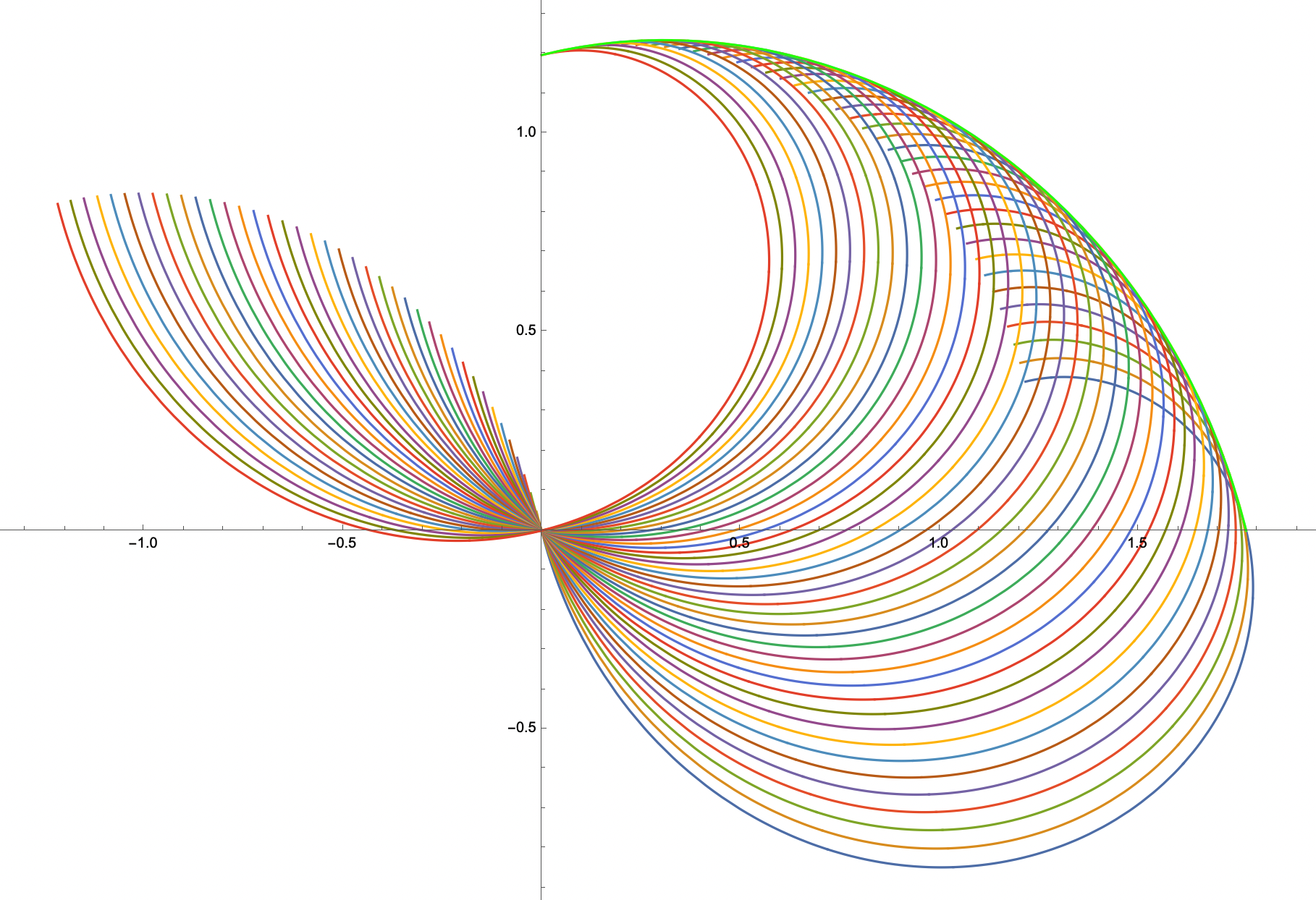}
    \caption{ Light green curve: part of conjugate locus $\Sigma(x)$. }
    \label{picref}
\end{figure}

\end{example}

\end{example}
\section{Numerical experiments}\label{sec_num}
This section aims to illustrate the artifacts arising
in the reconstruction by numerical experiments. 
We consider the family of unit circles with a fixed orientation, 
i.e., the magnetic geodesics w.r.t. the Euclidean metric and a constant nonzero magnetic field. 
See Example \ref{ex_c} in Section \ref{sec_example} and \cite{MR2970707} for more details.

More explicitly, let $M$ be a bounded domain without boundary in $\mathbb{R}^2$, for example, 
the open disk of radius $R=3$ centered at the origin.
Suppose $f$ is a smooth function supported in $M$. We define 
$$
I f(x_1,x_2) = \int_{|w|=1} f((x_1,x_2) + w) d l_w = \int_0^{2\pi} f(x_1+\cos \alpha, x_2 + \sin \alpha) d\alpha
$$	
as the integral transform performed over unit circles with radius $1$. 
These circles can be parameterized by their centers. 
Then the adjoint operator is
$$
I^* g(y_1,y_2) = \int_{|w|=1} g((y_1+y_2) - w) d l_w = \int_0^{2\pi} g(y_1-\cos \alpha, y_2 - \sin \alpha) d\alpha
$$
which coincides with $I$ itself. 
Numerically we compute $I f$ by the following steps.
\begin{itemize}
    \item[(1)] Discretization. We introduce $6 N \times 6N$ equispaced points in the square domain $[-3,3]\times[-3,3]$ so the grid spacing is ${1}/{N}$. 
    We discretize the input function $f$ over the grids. 
    \item[(2)] We compute the integral by the Trapezoidal method
    $$
    I f(x_1,x_2) = \sum_{k=1}^{n} \frac{f(z_{k-1},w_{k-1}) + f(z_{k},w_{k})}{2} \Delta \alpha + O(\frac{1}{n^2}),
    $$
    where $z_{k} = x_1 + \cos{{2 \pi k}/{n}}, w_{k} = x_2 + \sin{{2 \pi k}/{n}}$ and $\Delta \alpha = \frac{2 \pi}{n}$.
    When the input function is only given over grids, we can use the bilinear interpolation to approximate its values at $\{(z_k,w_k), k = 1, \ldots, n\}$ in Step (2) before the numerical integration.
\end{itemize}
Then we use the same method to compute $I^*$. 
\subsection{Backprojection}
First, we consider the reconstruction of $f$ from the transform by the backprojection $I^*If$.
To check the numerical implementation of the backprojection, one can compare the numerical result with the analytical one given by the formula 
\[
I^*If(x_1,x_2) = \int_{0}^{2} \int_{0}^{2\pi} \frac{4}{4 - r^2} f(x_1+r \cos \theta, x_2+ r\sin \theta) d\theta \diff r.
\]
in \cite{MR2970707}. 
Let $f_1$ 
be a truncated Gaussian concentrated near the origin, as an approximation of a delta function at the origin, and let $f_2 = 0$. 
We choose $N = 40$ and  the relative error between the numerical result $R_N$ and the analytical one
\[
\frac{\|{R_N - I^*If }\|_2}{\|I^*If\|_2} = 0.0140
\]
is relatively small.
In this case, $f$ is discretized as a matrix of $240\times240$ and we integrate it over unit circles. 
From Figure \ref{fig_1}, we can see
the artifacts appear exactly in the location of conjugate points. 
Indeed, by the analysis in Example \ref{ex_c}, the conjugate locus of the origin is the circle centered at the origin of radius $2$.
With $f$ as an approximation of the delta function, the singularities of $f$ are located near the origin in all directions. 
The conjugate covectors of the singularities of the delta function are described by the equation (\ref{eq_cc}), which are covectors conormal to the circle of radius $2$. 
Then by (\ref{eq_fb}), 
from the backprojection we recover both the singularities of $f = f_1$ and the singularities of $ F_{12} f_1$.
Note here we can omit $\Lambda$ in (\ref{eq_fb}) if we only consider the singularities, since $\Lambda$ is an elliptic $\Psi$DO. 

\begin{figure}[h]
    \centering
    \begin{subfigure}{0.5\textwidth}
        \centering
        \includegraphics[width=0.8\linewidth]{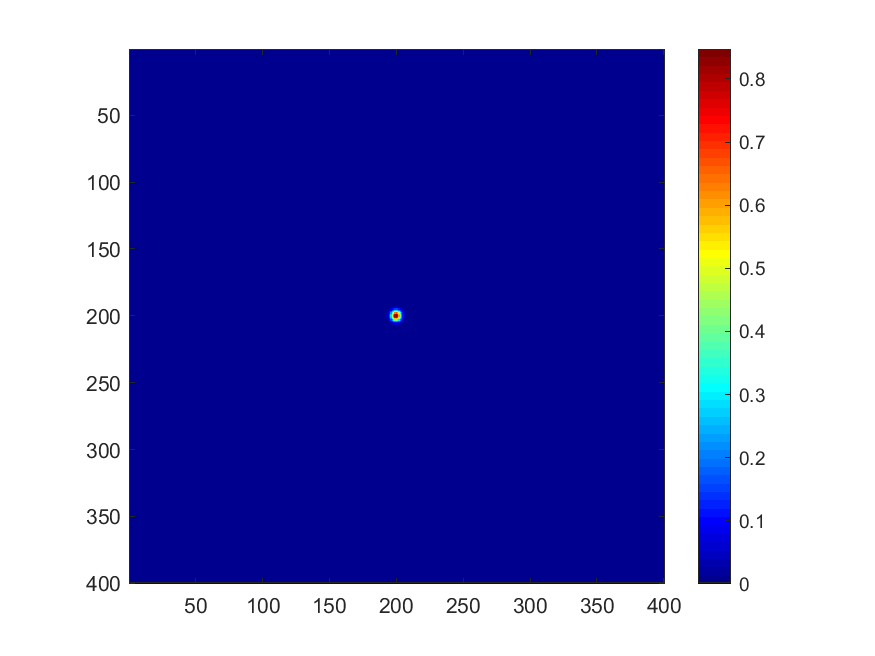}
    \end{subfigure}%
    \begin{subfigure}{0.5\textwidth}
        \centering
        \includegraphics[width=0.8\linewidth]{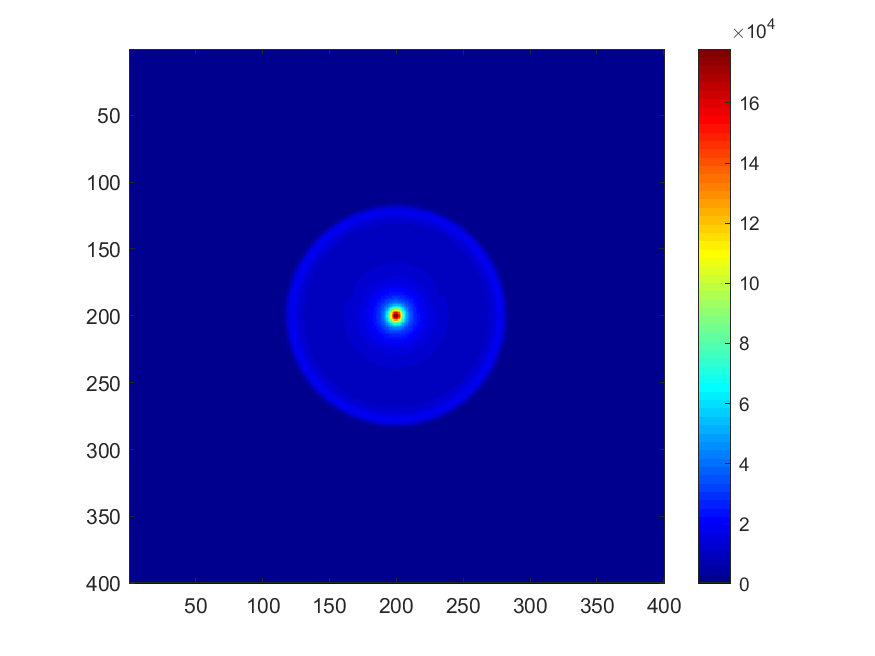}
    \end{subfigure}	
    \caption{The true $f$ and the reconstruction from the backprojection.}\label{fig_1}
\end{figure}
\subsection{Landweber iteration}
In the following, we choose $f = f_1$ to be a modified Gaussian with singularities located both in
certain space and in certain direction, that is, a coherent state, as is shown in Figure \ref{fig_2} (a).
Note the singularities of $f$ are actually semi-classical and they are located near the origin. 
The same analysis works (see \cite{Stefanov2020} for FIOs and semi-classical wave front sets) and here we use this coherent state to illustrate the artifacts in the analysis before. 
If we use the backprojection to reconstruct $f$, the artifacts appear in the location of conjugate points, 
see Figure \ref{fig_2} (b).  
This is described by (\ref{eq_cc}) and (\ref{eq_fb}). 

Next, we use the Landweber iteration to reconstruct $f$.
The analysis in Section \ref{subsec_artifacts} explains the artifacts in the reconstruction from the  local data.
However, if we use the global data and have the prior knowledge that $f$ is supported in a compact set, then we can recover the singularities of $f$ without artifacts.
The following proposition is an analog to \cite[Corrollary 3]{Zhang2020}. 
\begin{pp}\label{pp_m}
    Suppose $f \in \mathcal{E}'(\mathbb{R}^2)$ and $If \in C^\infty(\mathcal{M})$. Then $f$ is smooth. 
\end{pp}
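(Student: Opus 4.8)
The plan is to run a propagation-of-singularities argument along the discrete orbit generated by the conjugate-covector relation, and then to terminate it using the compactness of $\supp f$. Assume for contradiction that $f$ is not smooth and fix $(p_0,\xi_0)\in \WF(f)$; since $\WF(f)$ is conic, I may take $\xi_0$ to be a unit covector. Recall from Example \ref{ex_c}, and in particular from (\ref{eq_cc}), that for the family $\Gamma_1$ of oriented unit circles the two covectors conjugate to $(p,\xi)$ are $(p-2\xi,\xi)$ and $(p+2\xi,\xi)$, \emph{both carrying the same covector} $\xi$. I would therefore introduce the orbit $p_k = p_0 + 2k\xi_0$, $k\in\mathbb{Z}$, all decorated with the fixed direction $\xi_0$. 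The key geometric observation is that consecutive points $p_k,p_{k+1}$ are antipodal on the unit circle $C_k$ centered at $p_0+(2k+1)\xi_0$, so that $(p_{k+1},\xi_0)$ is exactly the conjugate covector of $(p_k,\xi_0)$ produced by (\ref{eq_cc}); by Theorem \ref{thm_C_diff} this means $\mathcal{C}(p_k,\xi_0)=\mathcal{C}(p_{k+1},\xi_0)$, a single covector lying over $[C_k]\in\mathcal{M}$.

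Next I would localize near the conjugate pair $(p_k,\xi_0),(p_{k+1},\xi_0)$ and apply Theorem \ref{thm_microkernel} with $C_k$ as the fixed curve. Writing the microlocalizations of $f$ at these two covectors as $f_k$ and $f_{k+1}$, the hypothesis $If\in C^\infty(\mathcal{M})$ forces, in particular, $I(f_k+f_{k+1})$ to be smooth near $\mathcal{C}(p_k,\xi_0)$; since here $w\equiv 1$ is nonvanishing, Theorem \ref{thm_microkernel} then yields the microlocal identity $f_k \equiv -F_{12}^{(k)} f_{k+1}$ modulo smooth terms, where $F_{12}^{(k)}$ is the elliptic FIO attached to $C_k$ as in (\ref{def_F12}). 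Because an elliptic FIO preserves wavefront sets, this gives the propagation statement
\[
(p_k,\xi_0)\in \WF(f)\quad\Longleftrightarrow\quad (p_{k+1},\xi_0)\in \WF(f).
\]
Iterating in $k$, the single singular covector $(p_0,\xi_0)$ forces $(p_k,\xi_0)\in\WF(f)$ for \emph{every} $k\in\mathbb{Z}$.

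Finally I would invoke compact support. Since $f\in\mathcal{E}'(\mathbb{R}^2)$, the set $\supp f$ is compact, whereas $\{p_k=p_0+2k\xi_0\}$ is an unbounded discrete subset of a line (as $\xi_0\neq 0$); hence $p_k\notin\supp f$, and thus $(p_k,\xi_0)\notin\WF(f)$, once $|k|$ is large enough. This contradicts the conclusion that all $(p_k,\xi_0)$ are singular. Therefore $\WF(f)=\emptyset$ and $f$ is smooth. This is precisely the point at which the ``sequence of conjugate covectors leaving $M$'' from the discussion following (\ref{eq_fb}) renders the \emph{global} problem well-posed, in contrast with the local one.

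I expect the only delicate point to be the rigorous justification of the cancellation identity $f_k\equiv -F_{12}^{(k)}f_{k+1}$: one must check that smoothness of $If$ at the \emph{single} covector $\mathcal{C}(p_k,\xi_0)$ isolates precisely the contributions of the two conjugate covectors along $C_k$, uncontaminated by other singularities of $f$ situated elsewhere on $C_k$. This is where Theorem \ref{thm_C_diff} is essential: away from conjugate covectors $\mathcal{C}$ is a local diffeomorphism, so any other conormal singularity of $f$ along $C_k$ maps to a \emph{different} covector over $[C_k]$ and cannot interfere. Granting the clean-intersection (Bolker-type) composition calculus already used to define $F_{12}$ and $F_{21}$ in (\ref{def_F12}), the microlocal bookkeeping reduces to the two-covector model of Theorem \ref{thm_microkernel}, and the argument closes.
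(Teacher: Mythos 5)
Your proof is correct and follows essentially the same route as the paper: the same chain of conjugate covectors $p_k = p_0 + 2k\xi_0$ obtained from (\ref{eq_cc}), the same microlocal cancellation along each circle (the paper writes it as $I_{k-1}f_{k-1}+I_k f_k \equiv 0 \mod C^\infty$, which is equivalent to your identity $f_k \equiv -F_{12}^{(k)} f_{k+1}$ via Theorem \ref{thm_microkernel} and ellipticity), and the same termination using compactness of $\supp f$. The only cosmetic difference is the direction of the induction: you propagate singularities outward to reach a contradiction, whereas the paper propagates smoothness of the microlocalized data inward from outside the support.
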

\begin{proof}
Let $(x_0,\xi^0) \in T^*M$ and assume it is in the wave front set of $f$. 
This singularity can be canceled by its conjugate covectors, if they exist. 
As in \cite[Section 5]{Zhang2020}, we define
\begin{align}\label{eq_M}
\mathcal{M}(x_0,\xi^0) = \{ (x_k,\xi^k), 
&\text{ if it exists and is conjugate to $(x_{k'},\xi^{k'})$}, \\
& \text{where } k'= k-\operatorname{sgn} k, \text{for }k = \pm 1, \pm 2, \ldots \} \nonumber
\end{align}
as the set of all conjugate covectors related to $(x_0,\xi^0)$.
We can assume $\xi^0 = (\cos \eta_0, \sin \eta_0)$ is in a conic neighborhood of the covector $\xi = (1,0)$.
By (\ref{eq_cc}), we have 
\begin{align*} 
\mathcal{M}(x_0,\xi^0) = \{ (x_k,\xi^0): x_k = x_0 + 2k(\cos \eta_0, \sin \eta_0), \text{ for } k = \pm 1, \pm 2, \ldots 
\}.
\end{align*}
Let $V^k$ be a small conic neighborhoods of $(x_k ,\xi^k) \in \mathcal{M}(x_0,\xi^0)$.
Let $f_k$ be $f$ microlocally restricted to $V^k$ and $I_k$ be $I$ restricted to distributions singular in $V^k$. 
For each $k$, near the curve $\gamma_k$ where  $(x_{k-1} ,\xi^{k-1})$ and  $(x_k ,\xi^k)$ are conjugate covectors, 
we have the equation of cancellation of singularities 
\begin{equation}\label{eq_cs}
I_{k-1}f_{k-1} + I_k f_k = 0 \mod C^\infty,
\end{equation}
if we shrink $V^k$ such that $C(V^k)= V^{k-1}$. 
Since $f$ has compact support, there exist $k_+>0$ and $k_-<0$ such that $f$ is smooth near all $(x_k, \xi^k)$ with $k < k_+$ or $k< k_-$. 
Then we have 
\[
I_k f_k = 0 \mod C^\infty, \text{ for any } k < k_+ \text{ or } k< k_-.
\]
By (\ref{eq_cs}), it follows that $I_k f_k = 0 \mod C^\infty$ for all $k$. 
Thus, we have $f$ is smooth. 
\end{proof}
\begin{figure}[h]
    \centering
    \begin{subfigure}{0.3\textwidth}
        \centering
        \includegraphics[width=0.9\linewidth]{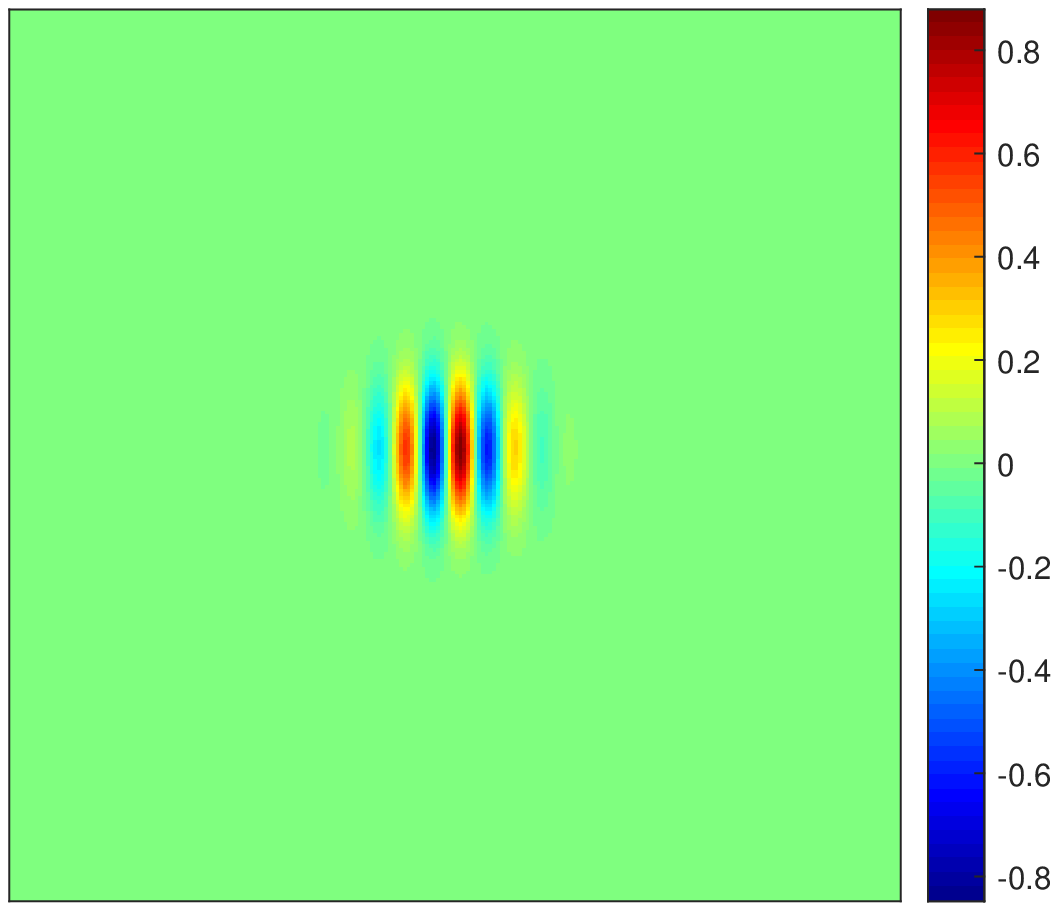}
        \subcaption{true $f$}
    \end{subfigure}%
    \begin{subfigure}{0.3\textwidth}
        \centering
        \includegraphics[width=0.9\linewidth]{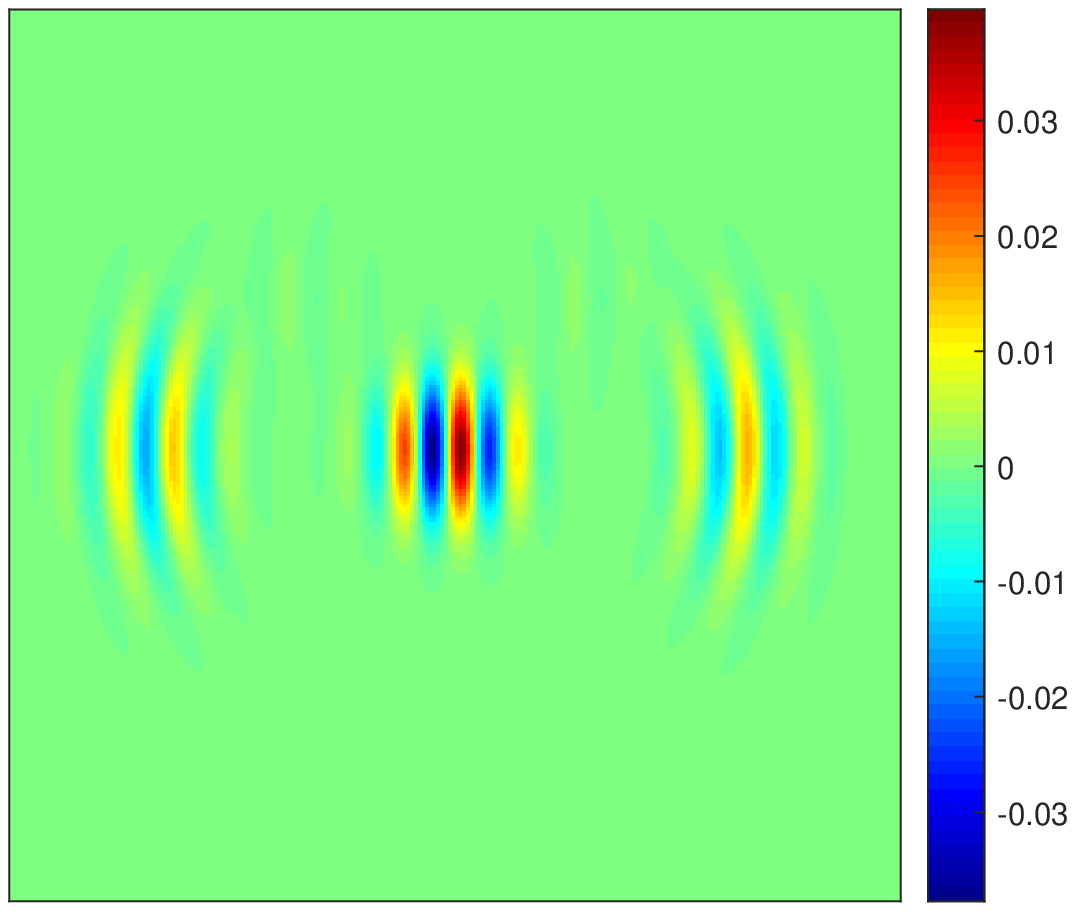}
        \subcaption{backprojection $f^{(1)}$}
    \end{subfigure}
    \begin{subfigure}{0.3\textwidth}
        \centering
        \includegraphics[width=0.9\linewidth]{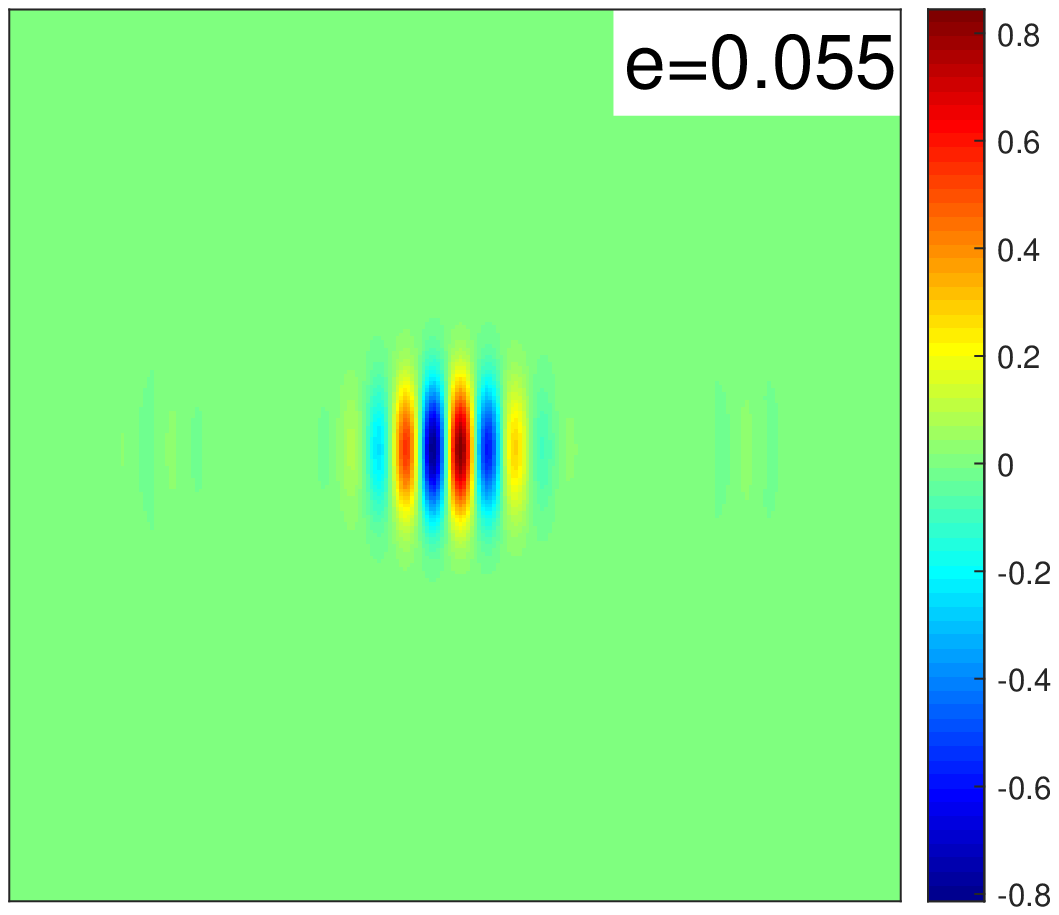}
        \subcaption{$f^{(100)}$}
    \end{subfigure}
    \caption{Reconstruction for a modified Gaussian function $f$, where $e = \frac{\|f - f^{(100)}\|_2}{\|f\|_2}$ is the relative error.}\label{fig_2}
\end{figure}
In the numerical experiment, suppose $f$ is compactly supported in the disk $D$ that is centered at the origin with radius $3$. 
When we use Landweber iteration, 
after performing the backprojection operator in each step, we smoothly cut the function such that it is still supported in radius $3$. 
More explicitly, we consider the operator
$
\mathcal{L}_\varphi = \varphi \mathcal{L} =\varphi \Lambda \chi I^*I,
$ 
where $\varphi$ is a smooth cutoff function with $\varphi = 1$ in $D$ and supported in a slightly larger disk $D'$. 
With $f$ compacted supported in $D$, we have 
$
\mathcal{L}_\varphi: L^2(D') \rightarrow L^2(D'). 
$
In this case, to solve the equation ${{I}} f = g$, we write 
\begin{equation*}
(\text{Id} - (\text{Id}- \gamma \mathcal{L}_\varphi^* \mathcal{L}_\varphi)) f = \gamma \mathcal{L}_\varphi^*(\varphi\Lambda\chi I^* g)
\end{equation*}
and use the truncated Neumann series
\[
f^{(N)} = \sum_{k=0}^{N}(\text{Id}- \gamma \mathcal{L}^*_\varphi \mathcal{L}_\varphi)^k 
\gamma \mathcal{L}^*_\varphi (\varphi\Lambda\chi I^* g).
\]
This series converges to the minimal norm solution to $\mathcal{L}_\varphi f = \varphi\Lambda\chi I^* g$ in $L^2(D')$.
After 100 steps of iteration, we get a quite good reconstruction (with the relative error $e = 0.055$).
This illustrates the result of Proposition \ref{pp_m}.

\begin{footnotesize}
    \bibliographystyle{plain}
    \bibliography{microlocal_analysis}

\begin{thebibliography}{10}

\bibitem{Ainsworth}
Gareth Ainsworth.
\newblock The attenuated magnetic ray transform on surfaces.
\newblock {\em Inverse Problems and Imaging}, 7, 2013.

\bibitem{Assylbekov2017}
Yernat~M. Assylbekov and Nurlan~S. Dairbekov.
\newblock The {X}-ray transform on a general family of curves on {F}insler
  surfaces.
\newblock {\em The Journal of Geometric Analysis}, 2017.

\bibitem{Bucataru2007}
Ioan Bucataru and Radu Miron.
\newblock {F}insler-{L}agrange geometry: Applications to dynamical systems.
\newblock Editura Academiei Romane Bucure{\c{s}}ti, 2007.

\bibitem{dairbekov2007entropy}
Nurlan~S Dairbekov and Gabriel~P Paternain.
\newblock Entropy production in {G}aussian thermostats.
\newblock {\em Communications in mathematical physics}, 269(2):533--543, 2007.

\bibitem{MR2351370}
Nurlan~S. Dairbekov, Gabriel~P. Paternain, Plamen Stefanov, and Gunther
  Uhlmann.
\newblock The boundary rigidity problem in the presence of a magnetic field.
\newblock {\em Adv. Math.}, 216(2):535--609, 2007.

\bibitem{douglas1927general}
Jesse Douglas.
\newblock The general geometry of paths.
\newblock {\em Annals of Mathematics}, pages 143--168, 1927.

\bibitem{Frigyik2007}
Bela Frigyik, Plamen Stefanov, and Gunther Uhlmann.
\newblock The {X}-ray transform for a generic family of~curves~and~weights.
\newblock {\em Journal of Geometric Analysis}, 18(1):89--108, 2007.

\bibitem{Gelextquotesinglefand1969}
I.~M. Gelfand, M.~I. Graev, and Z.~Ya. Shapiro.
\newblock Differential forms and integral geometry.
\newblock {\em Functional Analysis and Its Applications}, 3(2):101--114, 1969.

\bibitem{ginzburg1996closed}
Viktor~L Ginzburg.
\newblock On closed trajectories of a charge in a magnetic field. an
  application of symplectic geometry.
\newblock {\em Contact and symplectic geometry (Cambridge, 1994)}, 8:131--148,
  1996.

\bibitem{Guillemin1979}
Victor Guillemin and Shlomo Sternberg.
\newblock Some problems in integral geometry and some related problems in
  micro-local analysis.
\newblock {\em American Journal of Mathematics}, 101(4):915, 1979.

\bibitem{Helgason2013}
Sigurdur Helgason.
\newblock {\em The Radon transform}.
\newblock Springer US, 2013.

\bibitem{Holman2017}
Sean Holman, Fran{\c{c}}ois Monard, and Plamen Stefanov.
\newblock The attenuated geodesic {X}-ray transform.
\newblock {\em Inverse Problems}, 34(6):064003, 2018.

\bibitem{Holman2015}
Sean Holman and Gunther Uhlmann.
\newblock On the microlocal analysis of the geodesic {X}-ray transform with
  conjugate points.
\newblock volume 108, pages 459--494. Lehigh University, 2018.

\bibitem{homan2017injectivity}
Andrew Homan and Hanming Zhou.
\newblock Injectivity and stability for a generic class of generalized {R}adon
  transforms.
\newblock {\em The Journal of Geometric Analysis}, 27(2):1515--1529, 2017.

\bibitem{Hoermander2009}
Lars H{\"o}rmander.
\newblock {\em The Analysis of Linear Partial Differential Operators {IV}}.
\newblock Springer Berlin Heidelberg, 2009.

\bibitem{kunstmann2021}
Peer~Christian Kunstmann, Eric~Todd Quinto, and Andreas Rieder.
\newblock Seismic imaging with generalized radon transforms: stability of the
  {B}olker condition.
\newblock 2021.

\bibitem{Merry2011}
Will Merry and Gabriel Paternain.
\newblock {\em Inverse Problems in Geometry and Dynamics Lecture notes}.
\newblock incomplete version, 2011.

\bibitem{Monard2015}
Fran{\c{c}}ois Monard, Plamen Stefanov, and Gunther Uhlmann.
\newblock The geodesic ray transform on {R}iemannian surfaces with conjugate
  points.
\newblock {\em Comm. Math. Phys.}, 337(3):1491--1513, 2015.

\bibitem{Geometric}
Gabriel Paternain, Miko Salo, and Gunther Uhlmann.
\newblock {\em Geometric inverse problems, with emphasis in two dimensions}.
\newblock Cambridge University Press, to appear.

\bibitem{Shen2013}
Zhongmin Shen.
\newblock {\em Differential geometry of spray and {F}insler spaces}.
\newblock Springer Science \& Business Media, 2013.

\bibitem{Stefanov2020}
Plamen Stefanov and Samy Tindel.
\newblock Sampling linear inverse problems with noise.
\newblock {\em arXiv preprint arXiv:2011.13489}, 2020.

\bibitem{MR2970707}
Plamen Stefanov and Gunther Uhlmann.
\newblock The geodesic {X}-ray transform with fold caustics.
\newblock {\em Anal. PDE}, 5(2):219--260, 2012.

\bibitem{MR3080199}
Plamen Stefanov and Gunther Uhlmann.
\newblock Is a curved flight path in {SAR} better than a straight one?
\newblock {\em SIAM J. Appl. Math.}, 73(4):1596--1612, 2013.

\bibitem{STEFANOV2017}
Plamen Stefanov and Yang Yang.
\newblock Multiwave tomography with reflectors: {L}andweber's iteration.
\newblock {\em Inverse Problems \& Imaging}, 11(2), 2017.

\bibitem{Zhang2020}
Yang Zhang.
\newblock Artifacts in the inversion of the broken ray transform in the plane.
\newblock {\em Inverse Problems {\&} Imaging}, 14(1):1--26, 2020.

\bibitem{zhou2018local}
Hanming Zhou.
\newblock The local magnetic ray transform of tensor fields.
\newblock {\em SIAM Journal on Mathematical Analysis}, 50(2):1753--1778, 2018.

\end{thebibliography}
\end{footnotesize}

\end{document}